\newcommand{\dd}{\mathrm{d}}
\newcommand{\expo}{\mathrm{e}}
\spnewtheorem*{lem}{Lemma}{\bf}{\it}
\title{Kinetic equations and self-organized band formations}
\author{Quentin Griette \and Sebastien Motsch}
\institute{Quentin Griette \at Université de Bordeaux - Institut de Mathématiques, 354 cours de la Libération, F 33405, Talence \email{quentin.griette@math.u-bordeaux.fr}
  \and Sebastien Motsch \at Arizona State University - School of Mathematics and Statistical Sciences, 900 S Palm Walk, Tempe, AZ 85281 \email{smotsch@asu.edu}}
\begin{document}

\maketitle


\begin{abstract}
	Self-organization is an ubiquitous phenomenon in nature which can be observed in a variety of different contexts and scales, with examples ranging from fish schools, swarms of birds or locusts, to flocks of bacteria. The observation of such global patterns can often be reproduced in models based on simple interactions between neighboring particles. In this paper we focus on two particular interaction dynamics closely related to the one described in the seminal paper of Vicsek and collaborators. After reviewing the current state of the art in the subject, we study a numerical scheme for the kinetic equation associated with the Vicsek models which has the specificity of reproducing many physical properties of the continuous models, like the preservation of energy and positivity and the diminution of an entropy functional. We describe a stable pattern of bands emerging in the dynamics proposed by Degond-Frouvelle-Liu dynamics and give some insights about their formation. 
\end{abstract}

\section{Introduction}

Swarming dynamics have attracted a lot attention in recent years raising the question how simple interaction rules could lead to complex pattern formation \cite{vicsek_collective_2012}. One of the main difficulty is to link the individual behaviors of agents and the pattern formations observed at a larger scale. Fortunately the framework of kinetic equations allows such transition between microscopic and macroscopic dynamics. Among the many existing swarming models \cite{camazine_self-organization_2001}, the Vicsek model \cite{vicsek_novel_1995} is one of the most popular since it describes a rather simple dynamics (there is only alignment) with few parameters but it is however able to generate complex pattern which are challenging to predict analytically. 
The Vicsek model has been well studied both numerically \cite{nagy_new_2007,chate_modeling_2008,gregoire_onset_2004} or analytically \cite{gamba_global_2016,bolley_mean-field_2012,figalli_global_2018} and the derivation of its kinetic and macroscopic equation is well-understood \cite{degond_hydrodynamic_2013,degond_continuum_2008}. However, as noted first by Chaté and Grégoire\cite{gregoire_onset_2004}, there exists a certain regime where the Vicsek model leads to the formation of traveling bands. Many numerical studies have been conducted to better analyze the formation of these bands at the particle level but none have been proposed so far to study the bands using a kinetic or macroscopic framework. This manuscript aims at proposing a first study on such band formation from the angle of kinetic equations.

After the discovery of band formation in the Vicsek dynamics by Chaté and Grégoire \cite{gregoire_onset_2004}, there has been a debate \cite{aldana_phase_2003,chate_comment_2007} about the order of the phase transition in the Vicsek model (continuous or discontinuous). As there was no analytic framework available, the conjecture could be only based on (particle) numerical simulations. However, the derivation of kinetic and macroscopic equation for the Vicsek model \cite{degond_continuum_2008,degond_hydrodynamic_2013} indicated that in a dense regime of particles (the so-called moderately interacting particle \cite{oelschlager_law_1985}), the Vicsek model has a continuous transition from order to disorder. In this regime of high density, no phase transition or band formation could be observed. A major discovery was then provided by Degond, Frouvelle and Liu \cite{frouvelle_dynamics_2012,degond_macroscopic_2013,degond_phase_2015} where a modification of the (continuous) Vicsek was considered: alignment is proportional to the local density rather than to the mean direction. In their dynamics, a phase transition occurs: at low density, the velocity distribution becomes uniform, whereas at large density, the dynamics converge to a so-called von Mises distribution. This analytic result was only proven in an homogeneous setting (no spatial variable). Thus, it is still unknown what effect a transport term would have on the dynamics. This is however a very  challenging question as the transport term breaks the entropy dissipation. In this manuscript, we propose to investigate numerically the Degond-Frouvelle-Liu (DFL) dynamics in a non-homogeneous setting.

Starting from the kinetic equation associated with the Vicsek model, we first review some properties of the collisional operator (entropy dissipation) that will be central for the building of our numerical scheme. Most of these estimates are built on the Fokker-Planck structure of the operator. We do take advantage of this formulation in the design of our numerical scheme. The key properties of the collision operator (positivity preserving, entropy dissipation) are also satisfied by the discrete operator. Since we aim at analyzing the long-time behavior of the solution, it is essential to preserve these properties. For instance, several papers have already proposed to solve numerically the kinetic equation associated with the Vicsek model using other methods (spectral method \cite{gamba_spectral_2015}, particle method \cite{dimarco_self-alignment_2016}, discontinuous Galerkin \cite{filbet_discontinuous-galerkin_2017}). But we would rather have lower accuracy and a preserving numerical scheme to study the long-time behavior of the solution (even though our scheme is still second order accurate in the velocity-variable). We then explore the dynamics of the kinetic equation in various regimes. In the original Vicsek model, no band formations are observed, the spatial density becomes homogeneous while the velocity distribution becomes distributed according to a (global) von Mises distribution. In the Degond-Frouvelle-Liu (DFL) dynamics, however, when the density is above a threshold, band formation occurs starting from random initial configuration. As far as the authors know, this is the first time such band formations are observed at the kinetic level. Band were also observed in \cite{filbet_discontinuous-galerkin_2017} but there were only 'transient', the density profile would become flat after a long time. Here, the density profile is not  flattening out, but instead is becoming more and more concentrated. Numerically, we have to introduce an adaptive time step to deal with a demanding CFL condition because of this very phenomenon.

Although our numerical investigation suggests that band formation emerges from the DFL dynamics, it would be crucial to also develop an analytic framework to further understand this phenomenon. Our results indicate that the transport operator could further amplify the concentration originating from the alignment operator. From these observations, it seems unlikely that there exists an analytic profile for these bands. But the question remains open. Similarly, we could perform simulation in dimension 3, but the discretization of the unit sphere $\mathbb{S}^2$ is more delicate than  $\mathbb{S}^1$ (there is no 'uniform grid' on $\mathbb{S}^2$) and thus having discrete entropy dissipation or symmetry preserving would be more challenging. Finally, higher order accuracy in time should also be investigated using for instance \cite{gottlieb_strong_2001,carrillo_finite-volume_2015}.

\section{Microscopic description}

\subsection{Vicsek model}

The Vicsek model \cite{vicsek_novel_1995,degond_continuum_2008} at the particle level describes the motion of $N$ particles with position ${\bf x}_i\in\mathbb{R}^d$ (with $d=2,3$) and a direction $\omega_i\in\mathbb{S}^{d-1}$ (i.e. $|\omega_i|=1$). The evolution of the particles is given by the following system:
\begin{equation}
	\label{eq:micro}
	\begin{array}{rcl}
		{\bf x}_i' &=& c\, \omega_i \\
		\dd \omega_i &=& P_{\omega_i^\perp}(\mu\,\Omega_i\dd t + \sqrt{2\sigma}\circ\dd B_i^t),
	\end{array}
\end{equation}
where $c>0$ is the speed of the particle, $\mu$ is the strength of the alignment interaction, $\sigma$ is the intensity of the noise,  $\dd B_i^t$ are independent white noises, $P_{\omega_i^\perp}$ is the orthogonal projection on the orthogonal of $\omega_i$ defined as:
\begin{equation}
	\label{eq:proj_orth}
	P_{\omega_i^\perp}=\mbox{Id}-\omega_i\otimes\omega_i,
\end{equation}
which ensures that $|\omega_i(t)|=1$ over time, and $\Omega_i$ is the average direction of the particle $i$:
\begin{equation*}
	\label{eq:bigOmega}
	\Omega_i = \frac{{\bf j}_i}{|{\bf j}_i|} \;,\quad {\bf j}_i = \sum_{j,|{\bf x}_j-{\bf x}_i|\leq R} \omega_j,
\end{equation*}
where  $R$ is the radius of interaction.

\subsection{Degond-Frouvelle-Liu (DFL) dynamics}

Degond, Frouvelle and Liu \cite{frouvelle_dynamics_2012,degond_phase_2015} proposed a modification of the dynamics where the alignment interaction $\mu$ is proportional to the norm of the flux ${\bf j}_i$:
\begin{equation}
	\label{eq:micro_amic}
	\begin{array}{rcl}
		{\bf x}_i' &=& c \omega_i \\
		\dd \omega_i &=& P_{\omega_i^\perp}(\mu\,{\bf j}_i\dd t + \sqrt{2\sigma}\circ\dd B_i^t).
	\end{array}
\end{equation}
This modification has several consequences: i) unlike the Vicsek dynamics \eqref{eq:micro} which is ill-defined when the flux ${\bf j}_i$ equal zero ($\Omega_i$ not defined),  the DFL dynamics does not have any singularity, ii) there is a phase transition in the dynamics \eqref{eq:micro_amic} as the number of particles increases (or similarly as $\mu$ increases). The kinetic description of this dynamics will allow to better explain this phase transition (see section \ref{sub:homog_case}).

\subsection{Band formation}

Band formations have been first analyzed by Grégoire and Chaté  \cite{gregoire_onset_2004} in the case of the original {\it discrete} Vicsek model and several numerical studies have been conducted since \cite{chate_modeling_2008,nagy_new_2007,aldana_phase_2003}. To motivate our study, we present numerically an example of such band formation in the context of the {\it continuous} dynamics \eqref{eq:micro}.

The numerical simulation presented in this subsection is performed with $N=30,000$ particles on a square domain with length $L=4$ and periodic boundary condition. Initially, particles are distributed at random in space and velocity. Table \ref{tab:param_micro} gives the list of values for the parameters. We observe in Figure \ref{fig:simu_micro} the formation of a traveling wave moving in the $x$-direction. To further quantify this formation, we estimate the average density $\rho$ and velocity $u$ in the $x$-direction:
\begin{eqnarray*}
	\label{eq:rho_in_x}
	\overline{\rho}(x,t) \Delta x &=& \sum_{i=1}^N \mathbbm{1}_{[-\frac{\Delta x}{2},\frac{\Delta x}{2})}(x_i(t)-x) \\
	\overline{\rho}(x,t)\overline{u}(x,t) &=& \sum_{i=1}^N \mathbbm{1}_{[-\frac{\Delta x}{2},\frac{\Delta x}{2})}(x_i(t)-x) \cos \theta_i(t).
\end{eqnarray*}
where $x_i$ and $\cos \theta_i$ are respectively
the $x$-component of the position vector ${\bf x}_i$ and velocity $\omega_i$. We give an example of such $\rho$ and $u$ in Figure \ref{fig:simu_micro_average}.

We notice that the regime in which the band formation occurs is far from being dense. Indeed, in a homogeneous setting, the average number of neighbors is given by:
\begin{displaymath}
	\text{Average number of neighbors} \approx \frac{|B(0,R)|}{L^2}\times N =2.36,
\end{displaymath}
therefore we are far from being in kinetic regime (let alone macroscopic region). Thus, the validity of the kinetic equation associated with the dynamics (described in the next section) is questionable in this regime. Particles are not necessarily 'moderately interacting' \cite{oelschlager_law_1985}. 

\begin{figure}[ht]
	\centering
	\includegraphics[width=.49\textwidth]{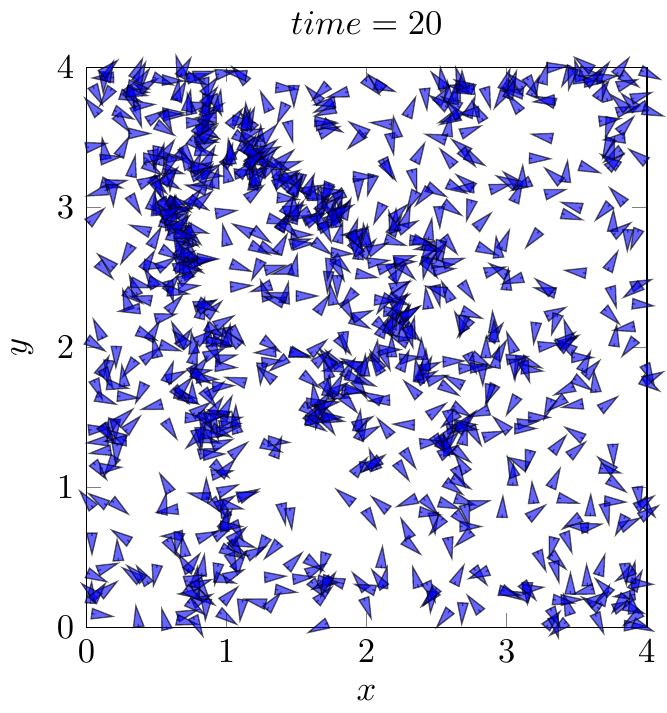}
	\includegraphics[width=.49\textwidth]{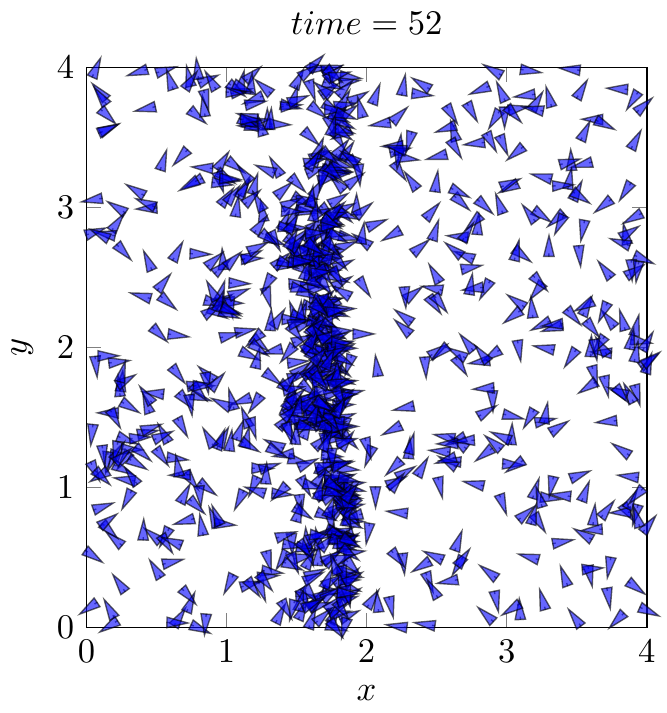}
	\caption{Illustration of the simulation of the Vicsek model \eqref{eq:micro} at two different time. We observe the formation of a vertical {\it band}. See table \ref{tab:param_micro} for the parameters used.}
	\label{fig:simu_micro}
\end{figure}

\begin{figure}[ht]
	\centering
	\includegraphics[width=.49\textwidth]{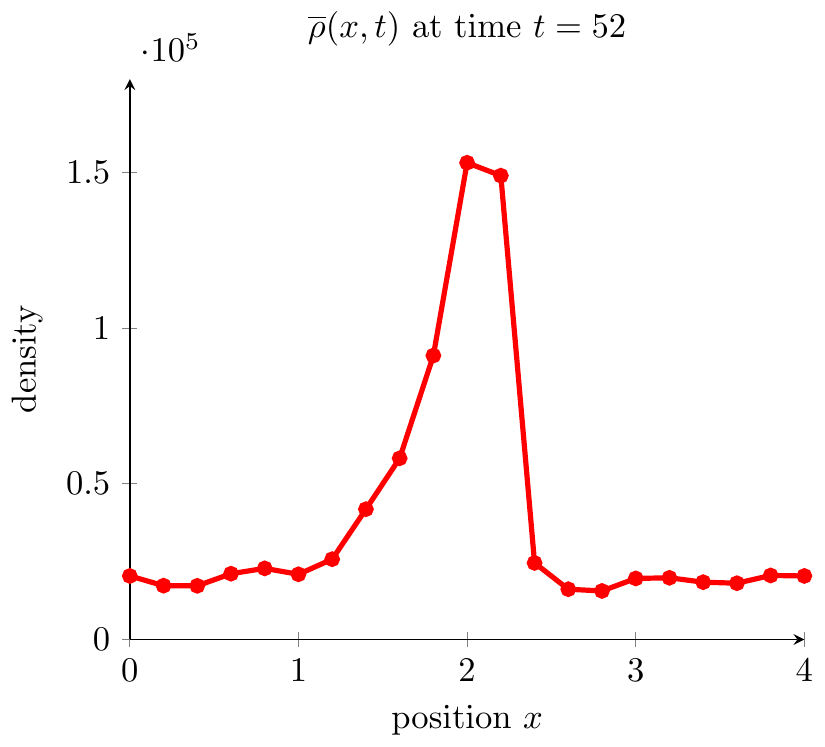}
	\includegraphics[width=.49\textwidth]{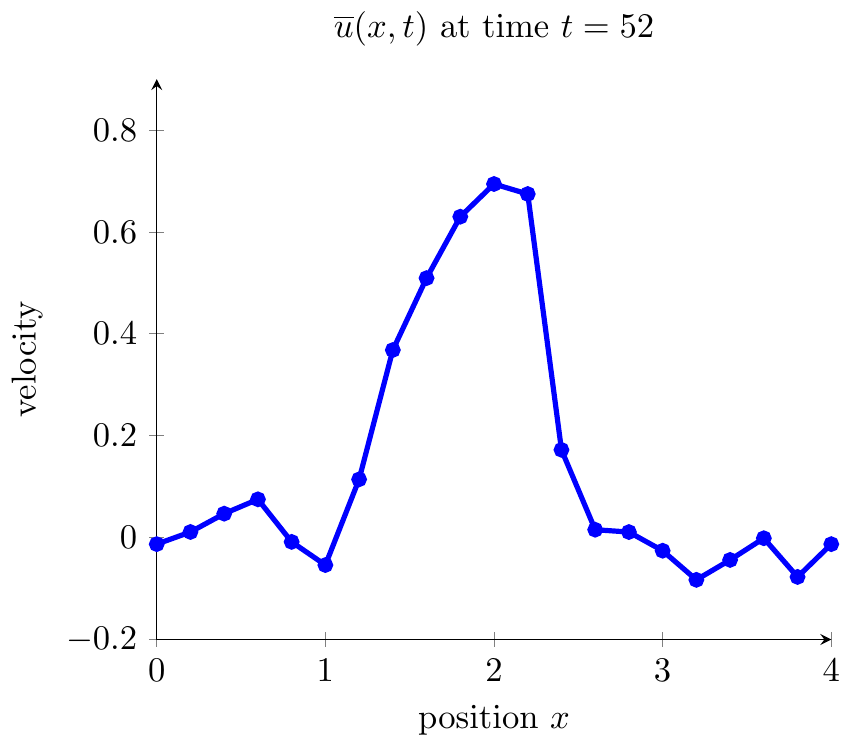}
	\caption{Density $\rho$ and average velocity $u$ in the $x$-direction at $t=52$. Where the density $\rho$ is larger, the speed $u$ increases.}
	\label{fig:simu_micro_average}
\end{figure}

\begin{table}[ht]
	\centering
	\begin{tabular}{l|c|c}
		Description & notation & value \\
		\hline
		Number particles & $N$ & $30,000$\\
		Strength alignment & $\mu$ & $100$\\
		Noise intensity & $\sigma$ & $20$\\
		Radius interaction & $R$ & $.02$\\
		Length domain & $L$ & $4 $\\
		Time step & $\Delta$t & $10^{-2}$
	\end{tabular}
	\caption{Parameters used in the simulations for Figures \ref{fig:simu_micro}-\ref{fig:simu_micro_average}}
	\label{tab:param_micro}
\end{table}

\section{Kinetic description}

\subsection{Introduction}

The kinetic equation associated with the Vicsek dynamics \eqref{eq:micro}  is described through the density distribution $f({\bf x},\omega,t)$. As the number of particles $N$ tends to infinity, the particle dynamics converge to the solution of a deterministic equation given by \cite{bolley_mean-field_2012} 
\begin{equation}
	\label{eq:kinetic_Vicsek}
	\partial_t f + c\, \omega\cdot\nabla_{\bf x} f= -\mu\nabla_\omega\cdot (P_{\omega^\perp}(\Omega_f)\,f) + \sigma\Delta_\omega f,
\end{equation}
where $c>0$ is the speed of the particles, $\mu>0$ is the intensity of the relaxation toward the mean velocity,  $\sigma>0$ is the diffusion coefficient, $P_{\omega^\perp}$ the projection operator \eqref{eq:proj_orth}, $\Omega$ is the mean velocity at the point ${\bf x}$
\begin{equation*}
	\label{eq:bar_omega}
	\Omega_f({\bf x},t) = \frac{{\bf j}_f({\bf x},t)}{|{\bf j}_f({\bf x},t)|} \quad \text{with} \quad {\bf j}_f({\bf x}, t) = \int_{y\in B({\bf x},R),\omega\in\mathbb{S}^{d-1}}\omega f({\bf y},\omega, t)\,\dd {\bf y} \dd \omega,
\end{equation*}
$R>0$ being the radius of interaction.

The DFL dynamics lead to a similar kinetic equation except for the transport term in $\omega$:
\begin{equation}
	\label{eq:kinetic_Amic}
	\partial_t f + c\, \omega\cdot\nabla_{\bf x} f= -\mu\nabla_\omega\cdot (P_{\omega^\perp}({\bf j}_f)\,f) + \sigma\Delta_\omega f.
\end{equation}
in other words the strength $\mu$ of alignment is now proportional to $|{\bf j}_f|$. 

\subsection{Homogeneous case}
\label{sub:homog_case}

To investigate kinetic equations, we study the homogeneous case, assuming that $f$ is independent of ${\bf x}$. Thus, the kinetic equations \eqref{eq:kinetic_Vicsek} and \eqref{eq:kinetic_Amic} become:
\begin{equation}
	\label{eq:homog}
	\partial_t f = Q(f),
\end{equation}
with:
\begin{eqnarray}
	\label{eq:Q}
	Q(f) &=& -\mu_f \nabla_\omega\cdot (P_{\omega^\perp}(\Omega_f)\,f) + \sigma\Delta_\omega f,
\end{eqnarray}
and $\Omega_f = \frac{{\bf j}_f}{|{\bf j}_f|}$ with ${\bf j}_f = \int_{\omega\in\mathbb{S}^{d-1}}\omega f(\omega)\,\dd \omega$
and
\begin{equation*}
	\label{eq:mu_both}
	\mu_f = \left\{
		\begin{array}{ll}
			\mu & \text{Vicsek dynamics} \\
			& \\
			\mu |{\bf j}| & \text{DFL dynamics}.
		\end{array}
	\right.
\end{equation*}
The operator $Q$ defined by \eqref{eq:Q} can be written as a Fokker-Planck type operator. Introducing:
\begin{equation}
	\label{eq:phi}
	\phi(\omega) = \left\{
		\begin{array}{ll}
			\langle{\bf j}_f,\omega\rangle & \text{Vicsek dynamics} \\
			&\\
			\langle\Omega_f,\omega\rangle & \text{DFL dynamics},
		\end{array}
	\right.
\end{equation}
with $\langle,\rangle$ the usual scalar product in $\mathbb{R}^n$, we find:
\begin{equation}
	\label{eq:FP}
	Q(f) = \sigma \nabla_\omega\cdot\left(M_f \nabla_\omega\cdot\left(\frac{f}{M_f} \right)\right), \quad \;\text{with} \quad M_f(\omega) = \expo^{\frac{\mu}{\sigma} \phi(\omega)},
\end{equation}
using the identity  $\nabla_\omega \langle{\bf u},\omega\rangle  = P_{\omega^\perp} ({\bf u})$. We deduce a first identity:
\begin{equation}
	\label{eq:decay_L2}
	\int_\omega \partial_t f \frac{f}{M_f} \, \dd \omega =  -\sigma \int_\omega M_f \left|\nabla_\omega \left(\frac{f}{M_f} \right)\right|^2\,\dd \omega \leq 0.
\end{equation}
Unfortunately, the left-hand side of \eqref{eq:decay_L2} cannot be written as a total time derivative and thus we cannot deduce any {\it entropy decay}. The {\it trick} is to notice the following:
\begin{eqnarray*}
	Q(f) &=& \sigma \nabla_\omega\cdot\left(f \frac{M_f}{f} \,\nabla_\omega\cdot\left(\frac{f}{M_f} \right)\right) \\
	&=& \sigma \nabla_\omega\cdot\left(f  \,\nabla_\omega\cdot\ln \left(\frac{f}{M_f} \right)\right).
\end{eqnarray*}
Therefore,
\begin{equation}
	\label{eq:decay_entropy}
	\int_\omega \partial_t f\, \ln \left(\frac{f}{M_f}\right) \, \dd \omega =  -\sigma \int_\omega f \left|\nabla_\omega \ln \left(\frac{f}{M_f} \right)\right|^2\,\dd \omega \leq 0.
\end{equation}
Thanks to the property of the logarithm, the left-hand side can now be written as a total time derivative and we deduce the following proposition.
\begin{proposition}
	Suppose $f$ is a solution to the homogeneous kinetic equation \eqref{eq:homog} and consider the {\it free energy}:
	\begin{equation}
		\label{eq:free_energy}
		\mathcal{F}[f] = \int_\omega f\,\ln f\,\dd \omega - \frac{\mu}{\sigma} \Phi_f,
	\end{equation}
	with:
	\begin{equation}
		\label{eq:Phi}
		\Phi_f = \left\{
			\begin{array}{ll}
				|{\bf j}_f| & \text{Vicsek dynamics} \\
				&\\
				\frac{1}{2} |{\bf j}_f|^2 & \text{DFL dynamics}.
			\end{array}
		\right.
	\end{equation}
	It satisfies:
	\begin{equation*}
		\label{eq:decay_free_energy2}
		\frac{d}{dt} \mathcal{F} = -\sigma \int_\omega f \left|\nabla_\omega \ln \left(\frac{f}{M_f} \right)\right|^2\,\dd \omega \leq 0.
	\end{equation*}
\end{proposition}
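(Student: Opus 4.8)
The plan is to reduce the statement to the dissipation identity \eqref{eq:decay_entropy} already established above: it suffices to show that $\frac{d}{dt}\mathcal{F}[f] = \int_\omega \partial_t f\,\ln(f/M_f)\,\dd\omega$, after which the asserted equality and the sign are immediate. I would work under the standing assumption that $f(\cdot,t)$ is a smooth, positive solution, so that differentiation under the integral sign and integration by parts on $\mathbb{S}^{d-1}$ (a compact manifold without boundary) are justified; positivity is consistent with the Fokker--Planck structure \eqref{eq:FP}. Then I split $\mathcal{F}=\int_\omega f\ln f\,\dd\omega-\frac{\mu}{\sigma}\Phi_f$ and differentiate the two pieces separately.

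For the entropy piece, $\frac{d}{dt}\int_\omega f\ln f\,\dd\omega=\int_\omega\partial_t f\,(\ln f+1)\,\dd\omega$; since $Q(f)$ is a pure $\omega$-divergence, $\frac{d}{dt}\int_\omega f\,\dd\omega=\int_\omega Q(f)\,\dd\omega=0$, so the constant term drops and this piece equals $\int_\omega\partial_t f\,\ln f\,\dd\omega$. The crux is the time derivative of $\Phi_f$. Writing ${\bf j}_f(t)=\int_\omega\omega f(\omega,t)\,\dd\omega$, so that $\frac{d}{dt}{\bf j}_f=\int_\omega\omega\,\partial_t f\,\dd\omega$, one computes in the DFL case $\frac{d}{dt}\Phi_f=\frac{d}{dt}\bigl(\tfrac{1}{2}|{\bf j}_f|^2\bigr)=\bigl\langle{\bf j}_f,\tfrac{d}{dt}{\bf j}_f\bigr\rangle=\int_\omega\langle{\bf j}_f,\omega\rangle\,\partial_t f\,\dd\omega$, and in the Vicsek case $\frac{d}{dt}\Phi_f=\frac{d}{dt}|{\bf j}_f|=\bigl\langle\Omega_f,\tfrac{d}{dt}{\bf j}_f\bigr\rangle=\int_\omega\langle\Omega_f,\omega\rangle\,\partial_t f\,\dd\omega$ (legitimate since ${\bf j}_f\neq 0$ is anyway required there for \eqref{eq:Q} to make sense). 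In both cases this is exactly $\int_\omega\phi\,\partial_t f\,\dd\omega$ with $\phi$ from \eqref{eq:phi}; equivalently, $\phi$ is the functional derivative of $\Phi_f$, which is precisely how $\Phi_f$ was chosen, and all the $t$-dependence of the second term of $\mathcal{F}$ is carried by $\Phi_f$ (one does not differentiate $\phi$ itself). This is the step I would verify most carefully, since it is where the matching between $\mu_f$, $\phi$ and $\Phi_f$ for the two dynamics enters; I expect it to be the main, and essentially only, obstacle.

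Combining the two pieces, $\frac{d}{dt}\mathcal{F}[f]=\int_\omega\partial_t f\,\ln f\,\dd\omega-\frac{\mu}{\sigma}\int_\omega\phi\,\partial_t f\,\dd\omega=\int_\omega\partial_t f\,\bigl(\ln f-\tfrac{\mu}{\sigma}\phi\bigr)\,\dd\omega=\int_\omega\partial_t f\,\ln(f/M_f)\,\dd\omega$, using $M_f=\expo^{\frac{\mu}{\sigma}\phi}$. By \eqref{eq:decay_entropy} the right-hand side equals $-\sigma\int_\omega f\,\bigl|\nabla_\omega\ln(f/M_f)\bigr|^2\,\dd\omega\le 0$, which is the claim; everything besides the $\frac{d}{dt}\Phi_f$ identity is bookkeeping plus \eqref{eq:decay_entropy}.
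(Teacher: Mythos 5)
Your proposal is correct and follows essentially the same route as the paper: split $\mathcal{F}$, use conservation of mass to drop the constant from $\ln f + 1$, identify $\int_\omega \phi\,\partial_t f\,\dd\omega$ with $\frac{d}{dt}\Phi_f$ (the paper does this via the first variation $\frac{\delta\Phi}{\delta f}=\phi$, you via the chain rule on ${\bf j}_f(t)$ — the same computation), and conclude with \eqref{eq:decay_entropy}. Incidentally, the pairing you obtain ($\langle{\bf j}_f,\omega\rangle$ for DFL, $\langle\Omega_f,\omega\rangle$ for Vicsek) is the one consistent with \eqref{eq:Q} and \eqref{eq:Phi}; the labels in the paper's \eqref{eq:phi} appear to be swapped, so your instinct to check that matching carefully was well placed.
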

\begin{proof}
	We remark that left-hand side of \eqref{eq:decay_entropy} is a total derivative:
	\begin{eqnarray*}
		\int_\omega \partial_t f\, \ln \left(\frac{f}{M_f}\right) \, \dd \omega &=& \int_\omega \partial_t f\, \left(\ln f - \frac{\mu}{\sigma} \phi\right) \, \dd \omega \\
							&=& \int_\omega \partial_t (f\,\ln f) - \frac{\mu}{\sigma} (\partial_t f)\phi \, \dd \omega,
	\end{eqnarray*}
	using the conservation of mass $\int_\omega \partial_t f \, \dd \omega=0$. Notice moreover that $\phi$ \eqref{eq:phi} can be expressed as gradient (making the dynamics \eqref{eq:Q} a gradient flow as noted in \cite{figalli_global_2018}). Indeed, taking $f+{h}$ a small perturbation of $f$, we have:
	\begin{eqnarray*}
		|{\bf j}_{f+{h}}|^2 &=& \left|\int_\omega (f+{h})\omega\,\dd \omega\right|^2 = |{\bf j}_f|^2\; +\; 2 \langle\int_\omega f\omega\,\dd \omega,\int_\omega {h} \omega\,\dd \omega\rangle\; +\; \mathcal{O}({h}^2) \\
		      &=& |{\bf j}_{f}|^2\; +\; 2\int_\omega \langle{\bf j}_f,\omega\rangle {h} \,\dd \omega\;  + \;\mathcal{O}({h}^2),
	\end{eqnarray*}
	thus $\frac{\delta |{\bf j}_f|^2}{\delta f}(\omega) = 2 \langle{\bf j}_f,\omega\rangle$. In particular $\phi = \frac{\delta \Phi}{\delta f}$ with $\Phi$ given by \eqref{eq:Phi}. We deduce:
	\begin{equation*}
		\int_\omega (\partial_t f) \phi(\omega)\,\dd \omega = \int_\omega (\partial_t f) \frac{\delta \Phi}{\delta f}(\omega)\,\dd \omega = \frac{\text{d}}{\text{d}t} \Phi(f(t)).
	\end{equation*}
	Therefore, we obtain:
	\begin{eqnarray*}
		\int_\omega \partial_t f\, \ln \left(\frac{f}{M_f}\right) \, \dd \omega &=& \frac{\text{d}}{\text{d}t} \mathcal{F},
	\end{eqnarray*}
	with $\mathcal{F}$ given by \eqref{eq:free_energy}.
\end{proof}

\subsection{Phase transition}

Since the dynamics \eqref{eq:Q} have entropy, one can study the long-time behavior and deduce the convergence toward an equilibrium given as the minimizer of the {\it free energy} $\mathcal{F}$ \eqref{eq:free_energy}. But first we need to identify the minimizers of $\mathcal{F}$. To do so, we notice that once we fix the flux ${\bf j}_f$, the minimizer would be given by von Mises.
\begin{lemma}\label{lem:affinemin}
	Fix ${\bf j}$ with $0<|{\bf j}|<1$ and consider the affine space:
	\begin{equation*}
		\label{eq:affine}
		\mathcal{A} = \left\{f\in L^2(\mathbb{S}^{n-1}) \;\;| \;\; \int_{\mathbb{S}^{n-1}} \omega f(\omega)\,\dd \omega = {\bf j} \quad \text{and}\quad \int_{\mathbb{S}^{n-1}} f(\omega)\,\dd \omega = 1\right\}.
	\end{equation*}
	Then:
	\begin{equation*}
		\label{eq:min_von}
		\inf_{\mathcal{A}} \left\{\int_\omega f\,\ln f\,\dd \omega\right\} = \int_\omega M_*\,\ln M_*\,\dd \omega,
	\end{equation*}
	where $M_*$ is the von Mises \eqref{eq:FP} satisfying $\int_\omega \omega M_*(\omega)\,\dd \omega = {\bf j}$.
\end{lemma}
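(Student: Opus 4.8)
The plan is to reduce everything to the non-negativity of the relative entropy (Gibbs inequality) with respect to the von Mises density $M_*$. First I would record the one structural fact about von Mises densities that makes this work: for $0<|{\bf j}|<1$ there is a (unique) von Mises density $M_*(\omega)=c_0\,\expo^{\langle{\bf a},\omega\rangle}$, with ${\bf a}$ parallel to ${\bf j}$ and $c_0>0$ a normalizing constant, such that $\int_{\mathbb{S}^{n-1}}M_*\,\dd\omega=1$ and $\int_{\mathbb{S}^{n-1}}\omega\,M_*(\omega)\,\dd\omega={\bf j}$; this is the classical statement that the map sending the concentration parameter $\kappa\ge 0$ to the mean resultant length of the corresponding von Mises density is an increasing bijection of $[0,\infty)$ onto $[0,1)$. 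In particular $M_*\in\mathcal{A}$, so the claimed infimum is at most $\int_\omega M_*\ln M_*\,\dd\omega$.

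The crucial observation is that $\ln M_*$ is an affine function of $\omega$: $\ln M_*(\omega)=\ln c_0+\langle{\bf a},\omega\rangle$. Hence for any $f\in\mathcal{A}$ (that is, any non-negative density with the prescribed zeroth and first moments),
\begin{equation*}
	\int_\omega f\,\ln M_*\,\dd\omega=\ln c_0\int_\omega f\,\dd\omega+\Big\langle{\bf a},\int_\omega\omega f\,\dd\omega\Big\rangle=\ln c_0+\langle{\bf a},{\bf j}\rangle,
\end{equation*}
which depends only on ${\bf j}$ and not on $f$; taking $f=M_*$ shows that this common value equals $\int_\omega M_*\ln M_*\,\dd\omega$. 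Since $M_*$ is bounded above and below by positive constants on the compact sphere, all the integrals above are finite for every $f\in L^1$ with $f\ge 0$.

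Then I would close the argument with the elementary pointwise inequality $x\ln x\ge x-1$ for $x>0$: writing $u=f/M_*$ and multiplying by $M_*\ge 0$ gives $f\ln(f/M_*)\ge f-M_*$, and integrating over $\mathbb{S}^{n-1}$ yields $\int_\omega f\ln(f/M_*)\,\dd\omega\ge\int_\omega f\,\dd\omega-\int_\omega M_*\,\dd\omega=0$ (equivalently, apply Jensen's inequality to the convex function $x\mapsto x\ln x$ against the probability measure $M_*\,\dd\omega$), with equality precisely when $f=M_*$ a.e. Combining this with the previous paragraph,
\begin{equation*}
	\int_\omega f\ln f\,\dd\omega=\int_\omega f\ln(f/M_*)\,\dd\omega+\int_\omega f\ln M_*\,\dd\omega\ge \int_\omega M_*\ln M_*\,\dd\omega,
\end{equation*}
so every $f\in\mathcal{A}$ has entropy at least $\int_\omega M_*\ln M_*\,\dd\omega$, with equality for $f=M_*$; this gives the stated identity (and incidentally shows the infimum is attained, uniquely, at $M_*$).

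The only genuinely non-trivial ingredient is the first step — the existence of a von Mises density with exactly the prescribed flux — and this is where I would be careful: it is exactly the hypothesis $0<|{\bf j}|<1$ that guarantees it (for $|{\bf j}|=0$ one uses the uniform density, and $|{\bf j}|\ge 1$ is incompatible with $\int_\omega f\,\dd\omega=1$, $f\ge 0$ on $\mathbb{S}^{n-1}$ by Cauchy--Schwarz). Everything else is the standard \emph{maximum entropy under moment constraints} computation dressed up as a relative-entropy inequality, and it requires no regularity beyond $f\ge 0$, $f\in L^1$, which on the compact sphere automatically makes $\int_\omega f\ln f\,\dd\omega$ well-defined in $(-\infty,+\infty]$.
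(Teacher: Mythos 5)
Your proof is correct, and it takes a genuinely different route from the paper. The paper argues via a formal Lagrange-multiplier (Euler--Lagrange) computation: it \emph{assumes} a minimizer $f_*$ exists, writes the stationarity condition for the entropy under the two moment constraints, and concludes that $\ln f_*$ must be affine in $\omega$, hence $f_*$ is a von Mises distribution. That argument identifies the \emph{form} of any critical point but leaves open the existence of a minimizer, does not single out which von Mises distribution is obtained, and does not verify that the critical point is actually a minimum. Your argument closes all three gaps at once: you produce $M_*$ explicitly from the classical fact that the concentration-to-mean-resultant-length map is a bijection of $[0,\infty)$ onto $[0,1)$ (this is exactly where $0<|{\bf j}|<1$ is used), you exploit the affineness of $\ln M_*$ to show $\int f\ln M_*$ is constant on $\mathcal{A}$, and the Gibbs inequality $\int f\ln(f/M_*)\ge 0$ then gives the lower bound for \emph{every} admissible $f$ together with attainment and uniqueness. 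The only caveat worth flagging is that both you and the paper implicitly restrict $\mathcal{A}$ to nonnegative $f$ (the entropy $\int f\ln f$ is otherwise undefined), which the paper's definition of $\mathcal{A}$ does not state; your remark that nonnegativity plus compactness of the sphere makes the entropy well-defined in $(-\infty,+\infty]$ is the right way to handle this. Your version is the more complete proof; the paper's version has the pedagogical merit of exhibiting the variational origin of the von Mises form, which is the structural point the authors use afterwards.
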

\begin{proof}
	Assume there exists a minimizer $f_*$ and rewrite the constraint as
	\begin{equation*}
		\label{eq:rewrite}
		H[f] = \int f \ln f \;\;,\;\;\;\;
		\alpha[f] = \left| \int_{\mathbb{S}^{n-1}}\!\!\!\!\!\! \omega f - {\bf j} \right|^2 - 1 \;\;,\;\;\;\;
		\beta[f] = \left( \int_{\mathbb{S}^{n-1}}\!\!\!\!\! f - 1 \right)^2.
	\end{equation*}
	Denote $\lambda_1$ and $\lambda_2$ the Lagrange multiplier associated with $f_*$:
	\begin{eqnarray*}
		\label{eq:lag}
		\left.\frac{\delta H}{\delta f}\right|_{f_*} &=& \lambda_1 \left.\frac{\delta\alpha}{\delta f}\right|_{f_*} + \lambda_2 \left.\frac{\delta\beta}{\delta f}\right|_{f_*} \\
		\label{eq:lag2}
		\Rightarrow \ln f_* + 1 &=& \lambda_1 \,2\omega\cdot(-{\bf j} ) + \lambda_2 \cdot0.
	\end{eqnarray*}
	since $\int f_* = 1$. Thus, taking the exponential leads to:
	\begin{displaymath}
		f_*(\omega) = C \expo^{-2\lambda_1 \omega\cdot{\bf j}},
	\end{displaymath}
	and therefore $f_*$ is a von Mises distribution.
\end{proof}
As a consequence of Lemma \ref{lem:affinemin}, we can restrict the search of minimizers of the free energy $\mathcal{F}$ on von Mises distributions. In Figure \ref{fig:phase_transition}, we estimate numerically the entropy $\int_\omega M \ln M$ of von Mises distributions depending on their average velocity $|{\bf j}|=|\int_\omega \omega M|$ along with its approximation near ${\bf j}\approx0$:
\begin{equation*}
	\label{eq:approx_entropy}
	\int_\omega M \log M = -\log 2\pi + |{\bf j}|^2 \;+\; \mathcal{O}(|{\bf j}|^3).
\end{equation*}
We deduce that the free energy $\mathcal{F}$ for the Vicsek model will never have a minimum at ${\bf j}=0$ meaning that the uniform distribution is never stable. However, for the DFL dynamics, when the diffusion $\sigma$ is large, the free entropy $\mathcal{F}$ will be minimum at ${\bf j}=0$ and therefore the uniform distribution will become the stable equilibrium. These two situations are depicted in Figure \ref{fig:phase_transition_2}.

\begin{figure}[t]
	\centering
	\includegraphics[scale=.9]{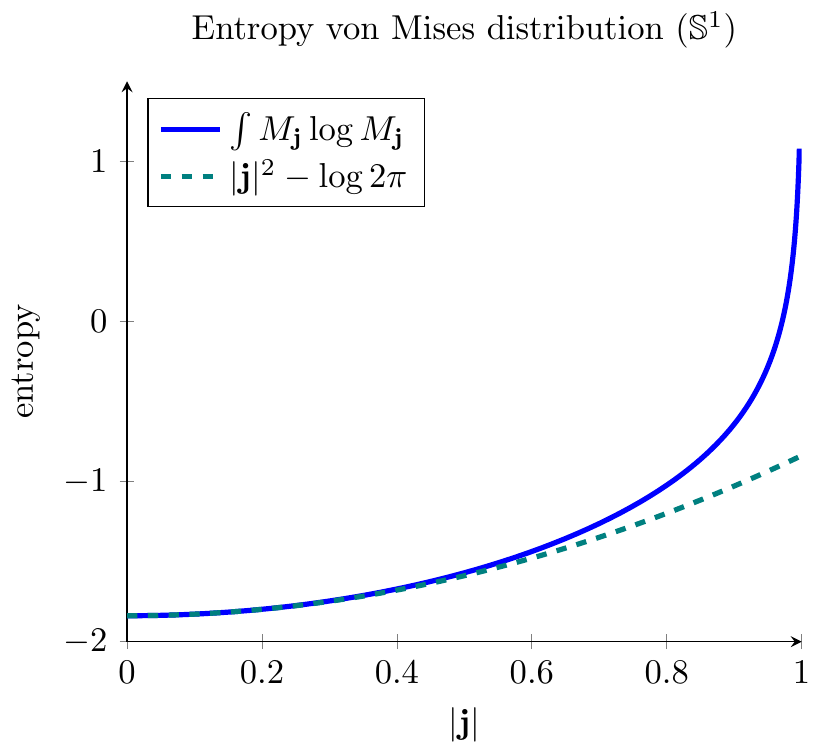}
	\caption{Entropy $\int M \log M$ for $M$ von Mises distribution as a function of the length $|\int\omega M|$. The curve increases quadratically near $|{\bf j}|=0$.}
	\label{fig:phase_transition}
\end{figure}

\begin{figure}[t]
	\centering
	\includegraphics[width=.47\textwidth]{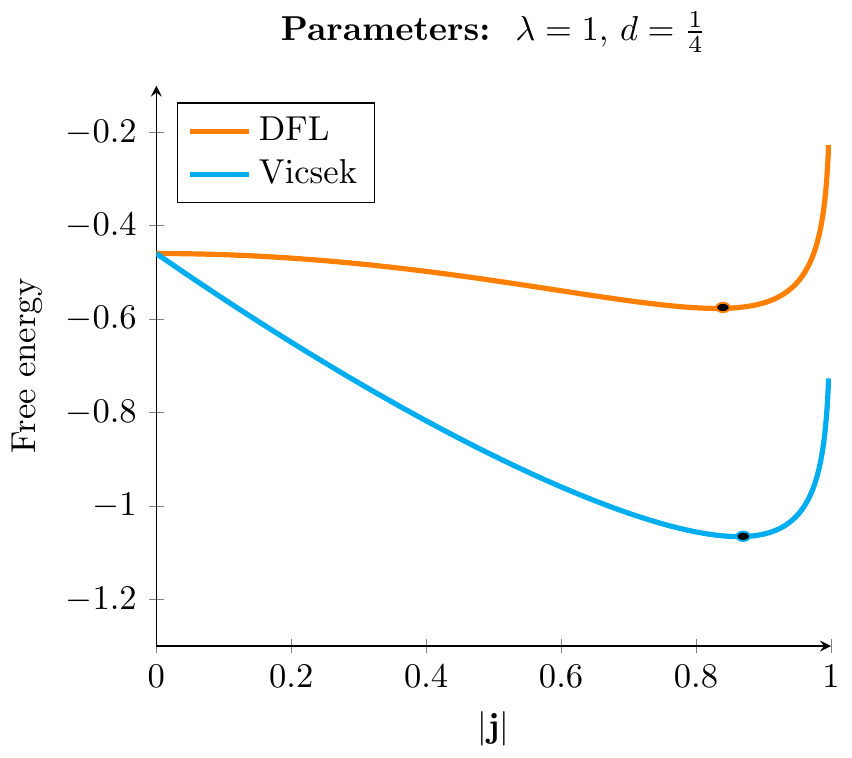} \qquad
	\includegraphics[width=.47\textwidth]{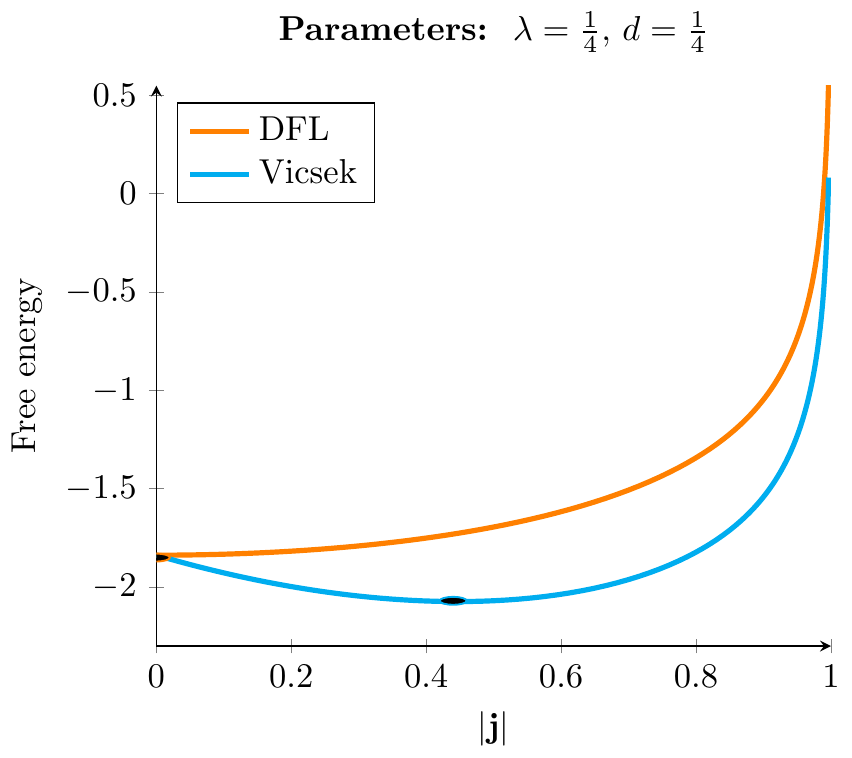}
	\caption{{\bf Left}: for low value of the diffusion coefficient $\sigma$, the minimizers for both free energies are von Mises distribution. {\bf Right}: when the diffusion coefficient $\sigma$ exceeds a certain threshold, the uniform distribution, i.e. ${\bf j}=0$, becomes the minimizer for the DFL dynamics.}
	\label{fig:phase_transition_2}
\end{figure}

\section{Numerical scheme}

Several schemes have already been proposed to study the kinetic equation \eqref{eq:kinetic_Vicsek} using spectral method \cite{gamba_spectral_2015}, discontinuous Galerkin \cite{filbet_discontinuous-galerkin_2017} or semi-Lagrangian \cite{dimarco_self-alignment_2016}. However, we are now interested in the long time behavior of the solution, thus we would like to design a numerical scheme with several properties:
\begin{itemize}
	\item conservative, preserve positivity (under some CFL condition)
	\item satisfy a discrete version of inequalities \eqref{eq:decay_L2} and \eqref{eq:decay_entropy}
\end{itemize}
In the following, we study the 2D scenario, taking advantage of the fact that the velocity space $\omega\in\mathbb{S}^1$ can be parametrized using polar coordinates by $\theta\in\mathbb{R}/2\pi\mathbb Z$ with $\omega = \left(\begin{matrix} \cos\theta \\ \sin\theta\end{matrix}\right)$. Thus, the kinetic equation \eqref{eq:kinetic_Vicsek} becomes:
	\begin{equation*}
		\label{eq:polar}
		\partial_t f + c\,\omega\cdot\nabla_{\bf x}f = -\mu_f\,\partial_\theta(\sin(\bar{\theta} - \theta)f)+\sigma\partial^2_\theta f,
	\end{equation*}
where $\bar{\theta}$ is such that:
\begin{equation*}
	\label{eq:bar_theta}
	\Omega = \left(
	\begin{matrix}
		\cos\bar{\theta} \\
		\sin\bar{\theta}
	\end{matrix}\right),
\end{equation*}
and $\mu_f$ is either a constant (Vicsek model) or proportional to $|{\bf j}|$ (DFL dynamics). 
Our numerical scheme is then based on a splitting method solving separately: 
\begin{itemize}
	\item the {\it transport part}
		\begin{equation}\label{eq:transport}
			\partial_t f + c\,\omega\cdot \nabla_{\bf x} f = 0, 
		\end{equation}
	\item the {\it collision part} :
		\begin{equation}
			\label{eq:collision}
			\partial_t f = Q(f),
		\end{equation}
		where $ Q(f) = -\mu_f \partial_\theta( \sin(\bar{\theta}-\theta)f) + \sigma\partial_\theta^2 f $.

\end{itemize}


\subsection{Collision operator}\label{sec:collision}

In this section we focus on numerically solving equation \eqref{eq:collision}. We write:
\begin{equation*}\label{eq:defcollision}
	Q(f)=-\mu_f\partial_\theta(\sin(\bar{\theta} - \theta)f)+\sigma\partial_\theta^2 f = \sigma\partial_\theta \left(M_{f}\partial_\theta \left(\frac{f}{M_{f}}\right)\right),
\end{equation*}    
where $M_{f}$ is the {\it Von Mises distribution} :
\begin{equation*}
	M_{f}(\theta) = C_0 \exp^{\frac{\mu_f}{\sigma}\cos(\theta-\bar{\theta})},
\end{equation*}
and $C_0$ is a normalization constant. Here, we simply take $C_0=1$.

\subsubsection{Discretization in $ \theta $}

Fix $N>0$ and consider a uniform discretization of the interval $[0,2\pi)$ with $\theta_k=i\Delta\theta$ and $\Delta\theta=\frac{2\pi}{N}$. Denote: $f_k=f(\theta_k)$ and $f_{k+\frac12}=f(\theta_k+\Delta\theta/2)$ and similarly $M_k = M_f(\theta_k)$. To approximate $Q$ (which is a differential operator), we use the second order approximation:
\begin{displaymath}
	\partial_\theta \left. \left(\frac{f}{M_f}\right)\right|_{\theta_k} = \frac{1}{\Delta\theta} \left( \frac{f_{k+\frac12}}{M_{k+\frac12}} - \frac{f_{k-\frac12}}{M_{k-\frac12}}\right) + \mathcal{O}(\Delta\theta^2),
\end{displaymath}
which gives
\begin{equation*}
	Q(f)(\theta_k) = Q_N(f)(\theta_k) +  \mathcal{O}(\Delta\theta^2),
\end{equation*}
with
\begin{eqnarray}
	\label{eq:Q_N}
	Q_N(f)(\theta_k) &=& \frac{\sigma}{\Delta\theta^2}\left[M_{k+\frac12}\left(\frac{f_{k+1}}{M_{k+1}}-\frac{f_{k}}{M_{k}}\right) -
		  M_{k-\frac12}\left(\frac{f_{k}}{M_{k}}-\frac{f_{k-1}}{M_{k-1}}\right) \right] \\ 
		 \notag &=& \frac{\sigma}{\Delta\theta^2}\left[\frac{M_{k+\frac12}}{M_{k+1}}f_{k+1}-\left(\frac{M_{k+\frac12}+M_{k-\frac12}}{M_k}\right) f_k + \frac{M_{k-\frac12}}{M_{k-1}}f_{k-1}\right].
\end{eqnarray}
The discrete operator $Q_N$ can be identified with a square $N\times N$ matrix:
\begin{equation*}
	\label{eq:Q_N_matrix}
	Q_N:= \frac{\sigma}{\Delta\theta^2}
	\left(\begin{matrix} 
		b_1     & c_1   & 0     & \cdots    &   0       & a_1   \\
		a_2     & b_2   & c_2   & \cdots    &           & 0     \\
		0       & a_3   & b_3   &           &           & \vdots\\
		\vdots  &       &       & \ddots    &           &        \\    
		0       &       &       &           & b_{n-1}   & c_{n-1}\\
		c_n     & 0     & \cdots&           & a_{n}   & b_n
	\end{matrix}\right),
\end{equation*}
and
\begin{equation*}
	\label{eq:abc}
	a_k = \frac{M_{k-\frac12}}{M_{k-1}},\;\;\; b_k = -\frac{M_{k+\frac12} + M_{k-\frac12}}{M_k},\;\;\; 
	c_k = \frac{M_{k+\frac12}}{M_{k+1}},
\end{equation*}
with a slight abuse of notation such as $M_{-\frac12}=M_{N-\frac12}$ (by the periodicity of $M_f$).


The discrete operator $Q_N$ reproduces many features of the differential operator $Q$. Define the scalar product:
\begin{displaymath}
	\langle u,v\rangle_{M_f^{-1}} = \sum_{i=1}^N \frac{u_k v_k}{M_k},  
\end{displaymath}
the operator $Q_N$ then satisfies the discrete equivalent to  \eqref{eq:decay_L2} and \eqref{eq:decay_entropy}.
\begin{proposition}
	The operator $Q_N$ \eqref{eq:Q_N} is symmetric with respect to this scalar product:
	\begin{displaymath}
		\langle Q_N(u),v\rangle_{M_f^{-1}} = \langle u,Q_N(v)\rangle_{M_f^{-1}},
	\end{displaymath}
	and satisfies:
	\begin{eqnarray}
		\label{eq:decay_L2_discrete}
		\langle Q_N(u),u\rangle_{M_f^{-1}} &=& - \frac{\sigma}{\Delta\theta^2} \sum_k M_{k+\frac12} \left(\frac{u_{k+1}}{M_{k+1}} - \frac{u_{k}}{M_{k}}\right)^2 \leq 0, \\
		\label{eq:decay_entropy_discrete}
		\langle Q_N(u),\ln \frac{u}{M_f} \rangle &\leq& 0.
	\end{eqnarray}
	\begin{displaymath}
	\end{displaymath}
\end{proposition}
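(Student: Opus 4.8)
The plan is to reduce the whole statement to one discrete Dirichlet-type form. First I would perform the change of unknown $g_k = u_k/M_k$ (and $h_k = v_k/M_k$), which rewrites the matrix $Q_N$ as a weighted discrete Laplacian,
\begin{equation*}
  Q_N(u)_k = \frac{\sigma}{\Delta\theta^2}\Bigl[M_{k+\frac12}(g_{k+1}-g_k) - M_{k-\frac12}(g_k-g_{k-1})\Bigr],
\end{equation*}
with all weights $M_{k\pm\frac12}>0$ and all indices understood modulo $N$ by periodicity of $M_f$, so that no boundary terms will appear. Note that during the collision step $M_f$ — hence the scalar product $\langle\cdot,\cdot\rangle_{M_f^{-1}}$ — is frozen, so $Q_N$ is a genuinely linear operator and the question of symmetry makes sense.

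The core computation is then a discrete summation by parts (Abel summation). Writing $\langle Q_N(u),v\rangle_{M_f^{-1}} = \sum_k Q_N(u)_k\,h_k$ and shifting the summation index in the contribution carrying the weight $M_{k-\frac12}$, I expect to obtain
\begin{equation*}
  \langle Q_N(u),v\rangle_{M_f^{-1}} = -\frac{\sigma}{\Delta\theta^2}\sum_k M_{k+\frac12}\,(g_{k+1}-g_k)(h_{k+1}-h_k).
\end{equation*}
This expression is manifestly symmetric under exchanging $(g,h)$, i.e. under exchanging $(u,v)$, which proves symmetry of $Q_N$ with respect to $\langle\cdot,\cdot\rangle_{M_f^{-1}}$.

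Inequality \eqref{eq:decay_L2_discrete} is then simply the diagonal case $v=u$, i.e. $h=g$: the right-hand side becomes $-\frac{\sigma}{\Delta\theta^2}\sum_k M_{k+\frac12}(g_{k+1}-g_k)^2 = -\frac{\sigma}{\Delta\theta^2}\sum_k M_{k+\frac12}\bigl(\tfrac{u_{k+1}}{M_{k+1}}-\tfrac{u_k}{M_k}\bigr)^2 \le 0$, since every $M_{k+\frac12}>0$. For \eqref{eq:decay_entropy_discrete} I would run the same summation by parts, this time pairing $Q_N(u)$ with the vector $\ell_k = \ln(u_k/M_k)=\ln g_k$ (which requires $u_k>0$ for all $k$, a hypothesis I would state explicitly), obtaining $\langle Q_N(u),\ln\frac{u}{M_f}\rangle = -\frac{\sigma}{\Delta\theta^2}\sum_k M_{k+\frac12}(g_{k+1}-g_k)(\ln g_{k+1}-\ln g_k)$; each summand is $\ge 0$ because $t\mapsto\ln t$ is nondecreasing, so $g_{k+1}-g_k$ and $\ln g_{k+1}-\ln g_k$ have the same sign, and the whole sum is $\le 0$.

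The computation is elementary; the only points needing care are the bookkeeping of the periodic indices in the summation by parts, so that no spurious boundary terms appear and exactly the half-integer weight $M_{k+\frac12}$ survives in the quadratic form, and the explicit recording of the positivity assumption $u_k>0$ that is needed for $\ln(u/M_f)$ to be defined in \eqref{eq:decay_entropy_discrete}.
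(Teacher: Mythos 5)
Your proposal is correct and follows essentially the same route as the paper: a discrete summation by parts (Abel formula) over the periodic indices yielding the manifestly symmetric bilinear form $-\frac{\sigma}{\Delta\theta^2}\sum_k M_{k+\frac12}(g_{k+1}-g_k)(h_{k+1}-h_k)$, from which symmetry, the diagonal case \eqref{eq:decay_L2_discrete}, and the entropy inequality \eqref{eq:decay_entropy_discrete} via the monotonicity of the logarithm all follow. Your explicit remarks that $M_f$ is frozen (so $Q_N$ is linear) and that $u_k>0$ is needed for the logarithm are sensible refinements the paper leaves implicit.
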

\begin{proof}
	Take any vectors $u$ and $v$, then the Abel formula (discrete integration by parts) gives:
	\begin{eqnarray*}
		\label{eq:sym_Q_N}
		\langle Q_N(u),v\rangle_{M^{-1}} &=& \frac{\sigma}{\Delta\theta^2}\sum_k M_{k+\frac12}\left(\frac{u_{k+1}}{M_{k+1}}-\frac{u_{k}}{M_{k}}\right) \left(\frac{v_{k}}{M_{k}}-\frac{v_{k+1}}{M_{k+1}}\right) \\
			&=& \langle u,Q_N(v)\rangle_{M^{-1}}. \nonumber
	\end{eqnarray*}
	From \eqref{eq:Q_N}, we also deduce \eqref{eq:decay_L2_discrete}.

	Moreover, using once again the Abel formula:
	\begin{eqnarray*}
		\langle Q_N(u),\ln \frac{u}{M}\rangle &=& \frac{\sigma}{\Delta\theta^2}\sum_k M_{k+\frac12}\left(\frac{u_{k+1}}{M_{k+1}}-\frac{u_{k}}{M_{k}}\right) \left(\ln \frac{u_{k}}{M_{k}}-\ln \frac{u_{k+1}}{M_{k+1}}\right).
	\end{eqnarray*}
	Thus, denoting $x=\frac{u_{k+1}}{M_{k+1}}$ and $y=\frac{u_{k}}{M_{k}}$, we have an expression of the form:
	\begin{displaymath}
		(x-y)(\ln y - \ln x) = (x-y) \ln \frac{y}{x} \leq 0
	\end{displaymath}
	for any $x,y>0$. We deduce \eqref{eq:decay_entropy_discrete}.
\end{proof}

\subsubsection{Explicit Euler}

The Euler method can be used to discretize in time the collisional part of the kinetic equation \eqref{eq:collision}:
\begin{equation*}
	f^{n+1} = f^n + \Delta t Q_N(f^n) = (\text{Id} + \Delta t Q_N) f^n.
\end{equation*}
A sufficient condition for the $L^\infty$ stability of the scheme is to have the matrix $\text{Id} + \Delta t Q_N$ positive (i.e. all coefficients positive). This sufficient condition leads to the following CFL condition:
\begin{equation}
	\label{eq:CFL_explicit_Euler}
	\max_k \{|b_k|\}\, \frac{\sigma\Delta t}{\Delta\theta^2}<1,
\end{equation}
which is usual for diffusion type operator. Moreover, if the CFL condition is met, then positivity and mass are preserved.
\begin{remark}
	We can find an explicit sufficient condition to guarantee the CFL condition \eqref{eq:CFL_explicit_Euler}. Indeed, writing:
	\begin{eqnarray*}
		\frac{M_{k+\frac12}}{M_k} &=& \frac{\exp \left(\frac{\mu_f}{\sigma}\cos( \theta_{k+\frac12}-\overline{\theta}) \right)}{\exp \left(\frac{\mu_f}{\sigma}\cos( \theta_{k}-\overline{\theta})\right)} = \exp \left(\frac{\mu_f}{\sigma}[\cos( \theta_{k+\frac12}-\overline{\theta}) - \cos( \theta_{k}-\overline{\theta})] \right) \\
			      & = &\exp \left(-2\frac{\mu_f}{\sigma}\,\sin\left(\theta_k-\overline{\theta}+\frac{\Delta\theta}{4}\right) \sin\left(\frac{\Delta\theta}{4}\right)\right),
	\end{eqnarray*}
	where we have used the identity $\cos \alpha-\cos \beta = -2 \sin \frac{\alpha+\beta}{2}\,\sin \frac{\alpha-\beta}{2}$. We deduce
	\begin{displaymath}
		\frac{M_{k+\frac12}}{M_k} \leq \exp\left(2\frac{\mu_f}{\sigma}\sin\left(\frac{\Delta\theta}{4}\right)\right),
	\end{displaymath}
	and find:
	\begin{displaymath}
		\max|b_k| \leq 2 \exp\left(2\frac{\mu_f}{\sigma}\sin\left(\frac{\Delta\theta}{4}\right)\right) = 2 + \frac{\mu_f}{\sigma}\Delta\theta + \mathcal{O}(\Delta\theta^2).
	\end{displaymath}
	This leads to the tractable (sufficient) CFL condition: 
	\begin{equation}\label{eq:CFL-collision-tractable}
		\frac{2\sigma\Delta t}{\Delta\theta^2}< \exp\left(-2 \frac{\mu_f}{\sigma}\sin\left(\frac{\Delta\theta}{4}\right)\right).
	\end{equation}
\end{remark}

\begin{algorithm}
	\caption{Collision part eq. \eqref{eq:collision}}
	\label{alg:collision}
	\begin{algorithmic}[1]
		\Procedure{Collision}{$f(\theta_k),\Delta t$}
		\State ${\bf j} = \sum_k \omega_k f_k \Delta\theta;$  $\;\;\overline{\theta} = angle({\bf j})$
		\State $M_k = \exp(\frac{\mu_f}{\sigma} \cos(\theta_k-\overline{\theta}))$
		\For {$k$ in $1:N$}
		\State $Q_N(f)_k = \frac{\sigma}{\Delta\theta^2} \cdot(a_k f_{k-1} - b_k f_k + c_k f_{k+1})$
		\EndFor
		\State $f \;\;+\!\!=\; \Delta t \cdot Q_N(f)$
		\State Return $f$
		\EndProcedure
	\end{algorithmic}
\end{algorithm}

\subsubsection{Adaptative time step for the collision}

One of the difficulties in computing an approximate solution to \eqref{eq:kinetic_Amic} is coping with the associated CFL condition \eqref{eq:CFL_explicit_Euler}. Indeed, the existence of a locally high $|j(f)|$ greatly decreases the right-hand side of \eqref{eq:CFL_explicit_Euler}, which penalizes the whole algorithm. Hence using a global CFL condition for the transport part and the collision part can lead to extremely long computation time. 

We propose to decouple the time steps for the transport part and the collision equation at each time step, by using an adaptive method for the latter. Technically, we use the maximal time step associated with the CFL condition to  solve the  transport part \eqref{eq:transport}, $ \Delta t=\Delta x$, which incidentally has the advantage of minimizing numerical diffusion. Then, for each $ (x_i, y_j) $, we consider  \eqref{eq:collision} as a differential equation with final time $ \Delta t$, which we solve by using the method described in Section \ref{sec:collision} with a variable time step $\delta t$ that needs to be recomputed at each time $0\leq s \leq \Delta t$: 
\begin{equation*}
	\delta t(s):=\min\left(\frac{\Delta \theta^2}{2\sigma}\exp\left(\mu_f\frac{-2\sin(\frac{\Delta\theta}{4})}{\sigma}\right), \Delta t -s\right).
\end{equation*}

This method also works for a constant relaxation $\mu(f)=\mu_0$, and can be preferred because it minimizes the numerical diffusion in the transport equation. Particularly when the constant $\sigma=\frac{\mu}{D}$ is large, in which case the collision CFL \eqref{eq:CFL-collision-tractable} is much smaller than the transport CFL \eqref{eq:CFL-transport}.

A comparison between the errors done by the standard and adaptive schemes, respectively, is shown in Figure \ref{fig:collision-error}.

\begin{figure}
	\centering
	\begin{subfigure}[t]{60mm}
		\centering
		\includegraphics{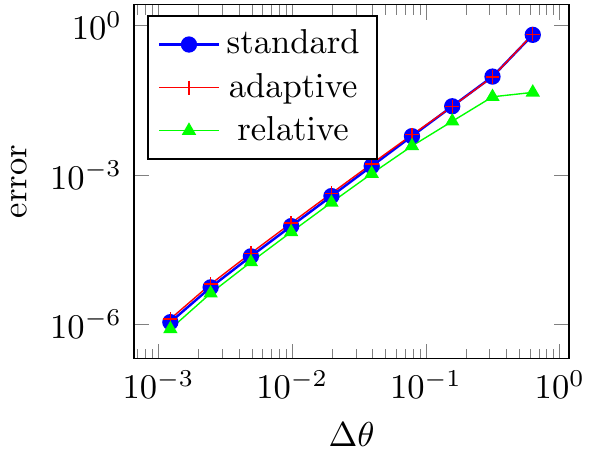}
		\caption{Error in the standard and adaptive models. The {\it standard} and {\it adaptive} curves show the maximum of the difference between the computed distributions and the one with lowest $\Delta \theta$. The "relative" curve shows the maximum of the difference between the standard and adaptive solutions.}
		\label{fig:collision_error_graph}
	\end{subfigure}
	\begin{subfigure}[t]{60mm}
		\centering
		\includegraphics{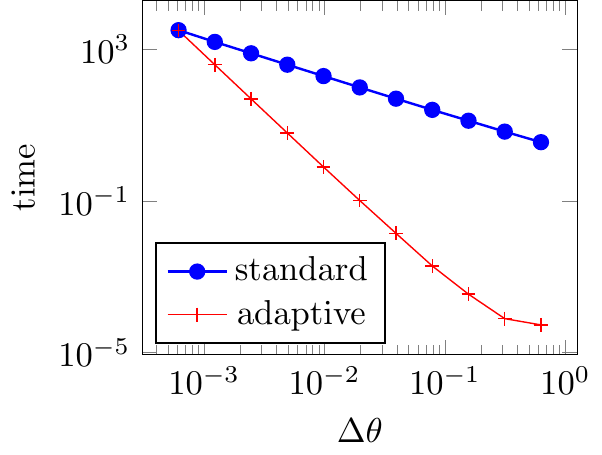}
		\caption{Comparison of the time (in seconds) needed to compute the solution. Notice that the two curves converge to the leftmost point, which was taken as a reference. Understandably, the computational advantage of the adaptive method increases with $\Delta \theta$.}\label{fig:collision_time_graph}
	\end{subfigure}

	\caption{Comparison of the standard and adaptive methods for the collision operator (Vicsek). Parameters are: $\mu=1.0$, $\sigma=0.2$, $\rho=1.0$, $\Delta t=8.458\cdot 10^{-7}$ (standard), $\Delta t=0.1$ (adaptive), $T=1.0$. The initial condition is $f_0(\theta)=\rho\left(1+\frac{1}{5}\underset{k=1}{\overset{5}{\sum}}\cos(p_k\theta)\right) $ where $p_1=1$ and $p_k $ is the prime following $p_{k-1}$.}\label{fig:collision-error}
\end{figure}

\subsection{Numerical scheme for the transport operator}

We use an upwind finite-difference method to solve the transport equation \eqref{eq:transport}. More accurate schemes are available (such as WENO scheme \cite{shu_high_2009}), however stability and positivity preserving are more essential in our study since we investigate the long-time behavior of the dynamics. We fix $M>0$ and consider a uniform discretization of the interval $ [0,L) $ in $M$ points with $x_i=i\Delta x$, $y_j=j\Delta y$, and $\Delta x=\Delta y=\frac{L}{M}$. To discretize the kinetic equation, we use:
\begin{equation*}
	\cos \theta \,\partial_x f= \begin{cases} \cos \theta\,\frac{f(x_{i}) -f(x_{i-1})}{\Delta x} + \mathcal O(\Delta x), \quad \text{if } \cos \theta\geq0 \\
		\cos \theta\,\frac{f(x_{i+1}) -f(x_{i})}{\Delta x} + \mathcal O(\Delta x), \quad \text{if } \cos \theta \leq 0,
	\end{cases}
\end{equation*}
and similarly for $\sin(\theta)\partial_y f$.
Using this discretization, the standard Euler scheme gives as CFL condition:
\begin{equation}
	\label{eq:CFL-transport}
	c\frac{\Delta t}{\Delta x} < 1.
\end{equation}

\begin{algorithm}
	\caption{Transport part eq. \eqref{eq:transport}}
	\label{alg:transport}
	\begin{algorithmic}[1]
		\Procedure{Transport}{$f(x_i,y_j,\theta_k),\Delta t$}
		\For {$i,j,k$}
		\State $F_{i+\frac12,j,k} = \left\{
			\begin{array}{ll}
				c\,\cos \theta_{i+\frac12}\,f_{i,j,k}& \text{if }\cos \theta_{i+\frac12}\leq0  \\
				c\,\cos \theta_{i+\frac12}\,f_{i+1,j,k}& \text{if }\cos \theta_{i+\frac12}\geq0  \\
			\end{array}
		\right.$
		\EndFor
		\For {$i,j,k$}
		\State $f_{i,j,k} \;\;+\!\!=\; -\frac{\Delta t}{\Delta x} \cdot(F_{i+\frac12,j,k}-F_{i-\frac12,j,k})$
		\EndFor
		\For {$i,j,k$}
		\State $F_{i,j+\frac12,k} = \left\{
			\begin{array}{ll}
				c\,\sin \theta_{j+\frac12}\,f_{i,j,k} & \text{if }\sin \theta_{i+\frac12}\leq0  \\
				c\,\sin \theta_{j+\frac12}\,f_{i,j+1,k}  & \text{if }\sin \theta_{i+\frac12} \geq0
			\end{array}
		\right.$
		\EndFor
		\For {$i,j,k$}
		\State $f_{i,j,k} \;\;+\!\!=\; -\frac{\Delta t}{\Delta y} \cdot(F_{i,j+\frac12,k}-F_{i,j-\frac12,k})$
		\EndFor
		\State Return $f$
		\EndProcedure
	\end{algorithmic}
\end{algorithm}

\subsection{Summary full scheme}

The full algorithm is finally a splitting between the transport and collision part. Notice that the time step $\Delta t$ should satisfy both CFL conditions \eqref{eq:CFL_explicit_Euler} and \eqref{eq:CFL-transport}. In general, the collisional CFL \eqref{eq:CFL_explicit_Euler} is more restrictive. Therefore, the transport equation will be solved with a small CFL corresponding to large numerical viscosity. Since we aim at studying the large time behavior of the dynamics, this numerical viscosity might drastically change the outcome. Thus, we propose to use an adaptive method for the collisional operator. The idea is to simply iterate $K$ `small' steps $\delta t=\Delta t/K$ to update the collision part choosing $K$ such that $\delta t$ satisfies the CFL condition  \eqref{eq:CFL-collision-tractable}. 


\begin{algorithm}
	\caption{Collision part eq. \eqref{eq:collision}}
	\label{alg:collision_adapt}
	\begin{algorithmic}[1]
		\Procedure{CollisionAdapt}{$f(\theta_k),\Delta t$}
		\State Find $K$ such that $\delta t=\Delta t/K$ satisfies \eqref{eq:CFL-collision-tractable}
		\For {$s$ in $1:K$}
		\State $f = \text{\bf Collision}(f,\delta t)$
		\EndFor
		\State Return $f$
		\EndProcedure
	\end{algorithmic}
\end{algorithm}

\begin{algorithm}
	\caption{Full kinetic eq. \eqref{eq:kinetic_Vicsek}}
	\label{alg:full_kinetic}
	\begin{algorithmic}[1]
		\State Fix $\Delta t<\min(\Delta x,\Delta y)/c$
		\State $t=0$
		\While {$t<T$}
		\State $f^* = \text{\bf Transport}(f^n,\Delta t)$
		\For {$i,j$}
		\State $f_{i,j,k}^{n+1} = \text{\bf CollisionAdapt}(f_{i,j,k}^*,\Delta t)$
		\EndFor
		\State $t \;+\!\!= \Delta t$ 
		\EndWhile
		\State Return $f$
	\end{algorithmic}
\end{algorithm}

\section{Numerical experiments}
\setcounter{equation}{0}

\subsection{Homogeneous case}

To first investigate our numerical scheme, we study the homogeneous equation, thus solving only the collision operator \eqref{eq:collision}. We present our numerical experiments with the Vicsek model \eqref{eq:kinetic_Vicsek}, however results are similar with the DFL dynamics except that the time step $\Delta t$ may have to be adapted (since the CFL condition depends on $|{\bf j}|$ which varies over time).

As a first sanity check, we estimate the accuracy of the scheme. With this aim, we fix a final time $T=1$ and time step $\Delta t=.001$. Then, we vary the meshgrid in $\theta$, taking $\Delta\theta\in\{\frac{2\pi}{8},\frac{2\pi}{16},\dots,\frac{2\pi}{128}\}$ and estimate the $L^2$ error with the reference solution $f_{ref}$ computing with $\Delta\theta=\frac{2\pi}{256}$. For the initial condition, we use a smooth  initial condition:
\begin{equation}
	\label{eq:f_0}
	f_0(\theta) = (1.1+\cos 4\theta)\cdot\exp\left(-\cos \big(\pi (s+s^8)\big)\right),\qquad \text{with } s=\theta/2\pi.
\end{equation}
We use a rather complicated expression to make sure that $f_0$ is non-symmetric. When $f_0$ is symmetric, the mean direction ${\bar \theta}$ is preserved over time, thus the Vicsek dynamics \eqref{eq:homog} becomes a linear evolution equation. Twisting the initial condition $f_0$ guarantees to have a fully non-linear equation.

In Figure \ref{fig:beauty_f0_t1}-left, we plot the initial condition $f_0$ along with the reference solution $f_{ref}$ at $t=1$. The $L^2$ error for various discretizations is given in log scale in Figure  \ref{fig:beauty_f0_t1}-right. We observe that the error is decaying quadratically as expected.

Moreover, we also investigate the large-time behavior of the solution. First, we measure the evolution of the free entropy $\mathcal{F}$ over time and we observe that it is strictly decreasing (fig.~\ref{fig:cv_free_energy}-left). Second, we estimate the rate of convergence of $f(t)$ toward an  equilibrium distribution. Using semi-log scale in fig.~\ref{fig:cv_free_energy}-right, we observe a linear decay indicating that the convergence is exponential.

\begin{figure}[ht]
	\centering
	\includegraphics[width=.47\textwidth]{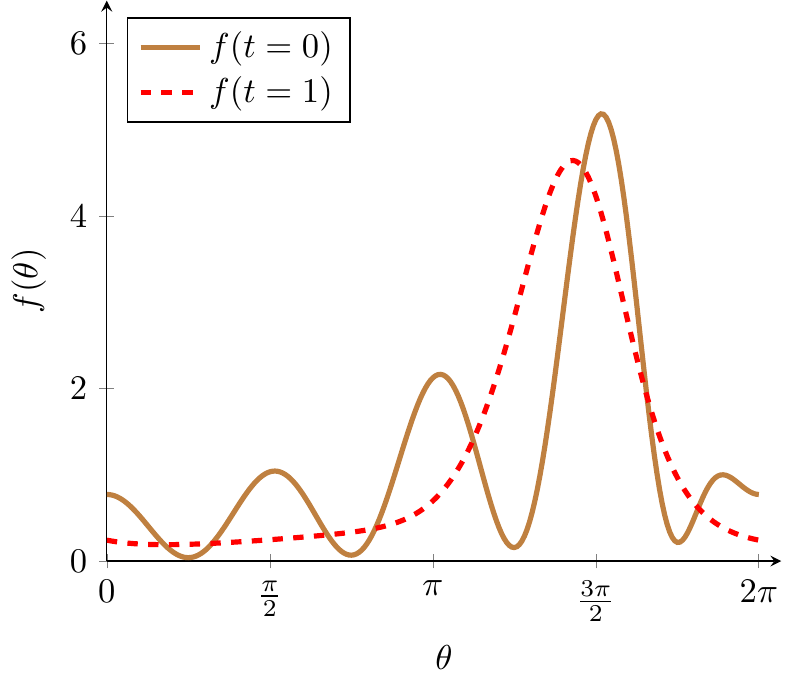} \quad
	\includegraphics[width=.47\textwidth]{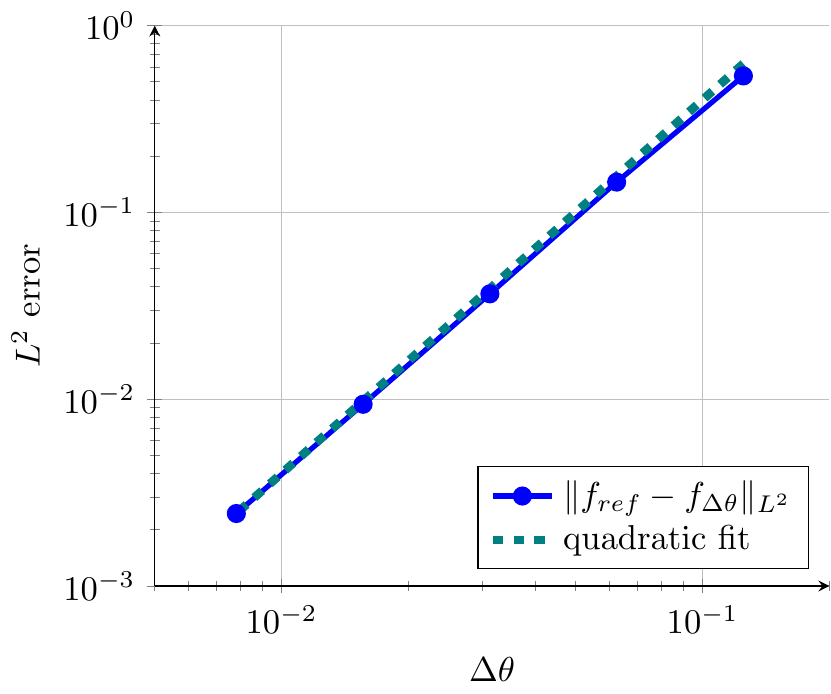}
	\caption{{\bf Left:} the initial condition $f_0$ \eqref{eq:f_0} and the reference solution $f_{ref}$ (dashed) computed after $t=1$ (with $\Delta t=10^{-3}$ and $\Delta\theta=\frac{2\pi}{256}$). {\bf Right:} $L^2$ error in log scale between the solution $f_{\Delta\theta}$ with the reference solution $f_*$ at $t=1$. We observe a quadratic accuracy.}
	\label{fig:beauty_f0_t1}
\end{figure}

\begin{figure}[ht]
	\centering
	\includegraphics[width=.47\textwidth]{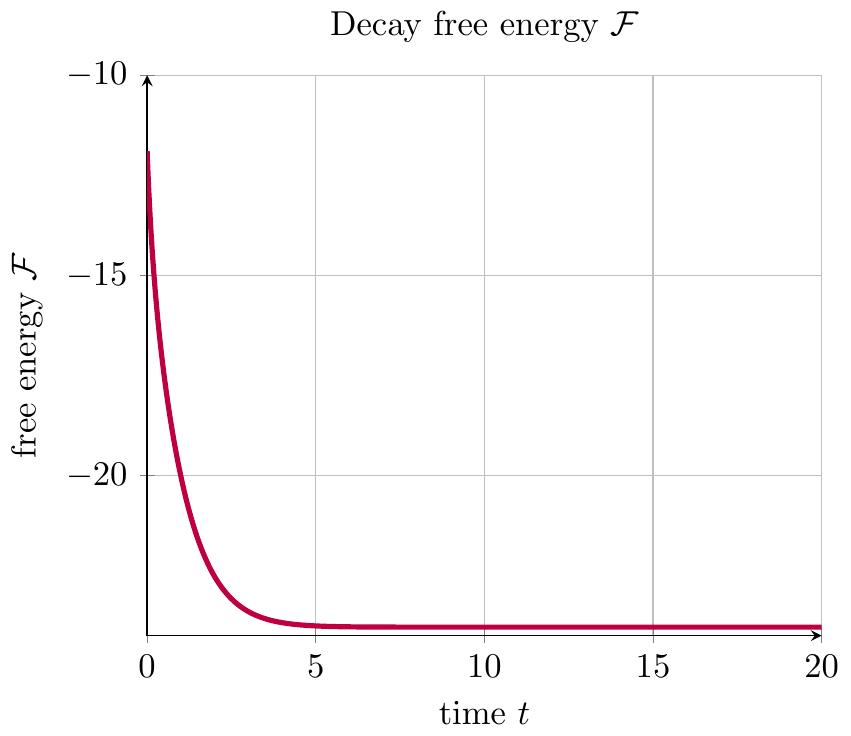} \quad
	\includegraphics[width=.47\textwidth]{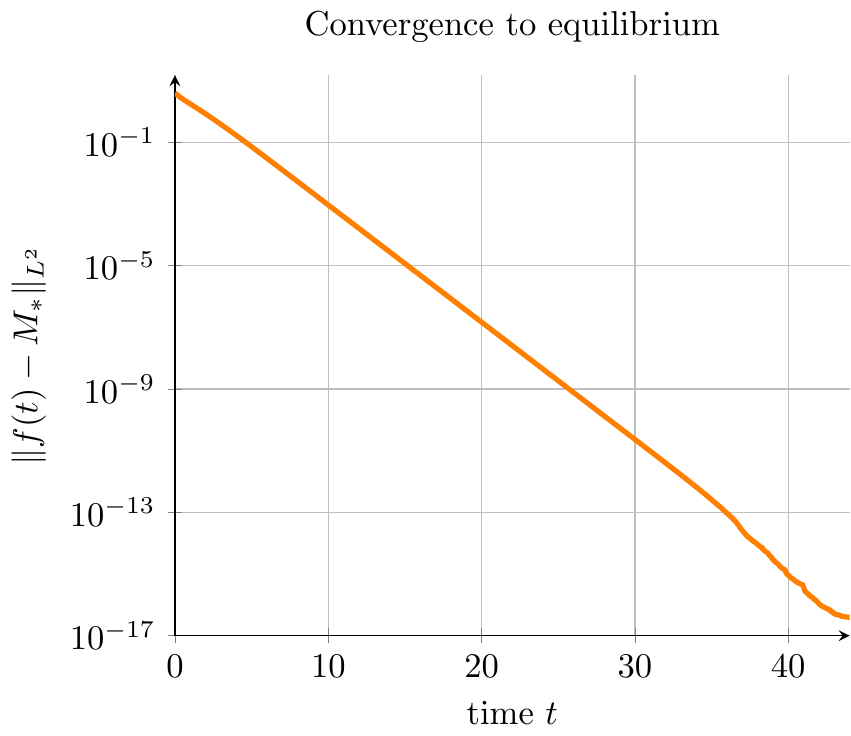}
	\caption{{\bf Left:} evolution of the free energy $\mathcal{F}$ \eqref{eq:free_energy} over time. The function is strictly decreasing. {\bf Right:} $L^2$ error between the solution $f(t)$ and its equilibrium distribution $M_*$. Since it uses semi-log scale, the convergence is actually exponential.}
	\label{fig:cv_free_energy}
\end{figure}


\subsection{Band formation}

\begin{figure}
	\centering
	\includegraphics{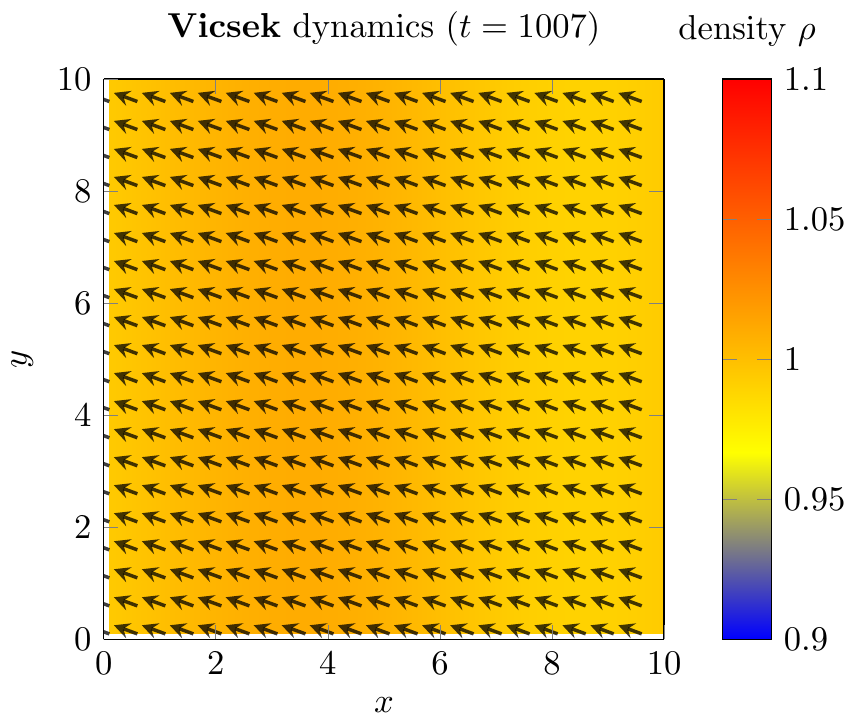}
	\caption{Typical shape of the long-time solution observed in the case of the Vicsek interaction model (obtained starting from a random initial condition) at $t=1000$. Parameters are: $\mu=1.0$, $\sigma=0.2$, $c=1.0$, $L=10.0$, $\Delta x=\Delta y=0.1$, $\Delta \theta = \frac{2\pi}{30}$.}
	\label{fig:Vicsek_longtime}
\end{figure}

\begin{figure}
	\centering
	\begin{minipage}[t]{60mm}
		\centering
		\includegraphics{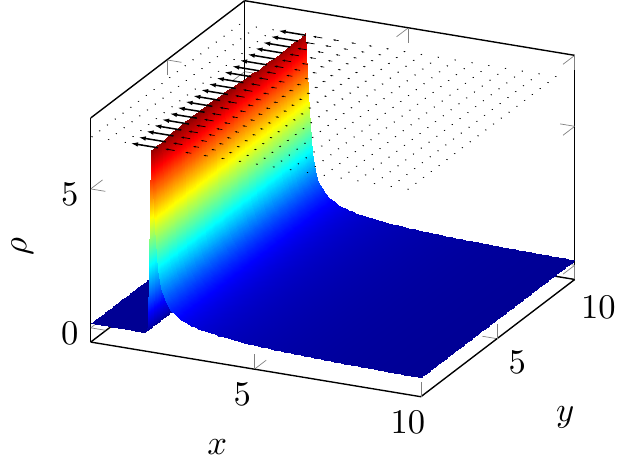}
		\subcaption{Starting from a random initial condition $t=1000.0$ (${\bar \rho}\approx 0.0766$).}\label{fig:band-fromrand-1000}
	\end{minipage}
	\begin{minipage}[t]{60mm}
		\centering
		\includegraphics{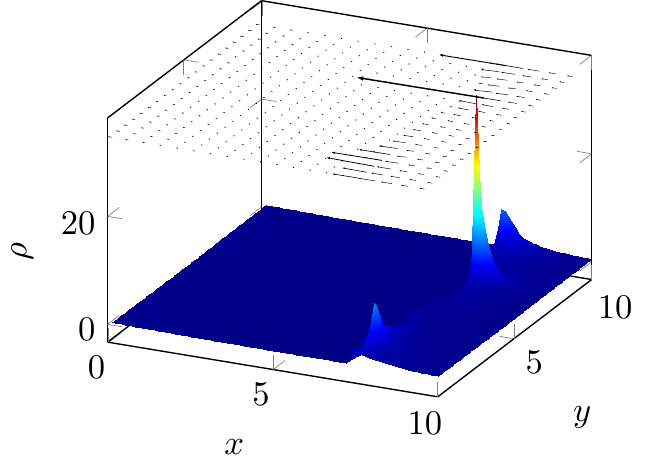}
		\subcaption{Starting from a random initial condition $t=1227.0$.}\label{fig:band-fromrand-1227}
	\end{minipage}

	\begin{minipage}[t]{60mm}
		\centering
		\includegraphics{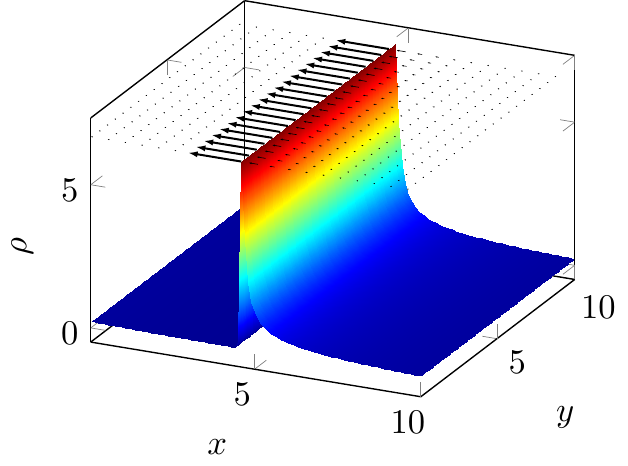}
		\subcaption{Starting from a homogeneous initial condition, $t=1007.0$ (${\bar \rho}=0.0763$).}\label{fig:band-homog}
	\end{minipage}
	\begin{minipage}[t]{60mm}
		\centering
		\includegraphics{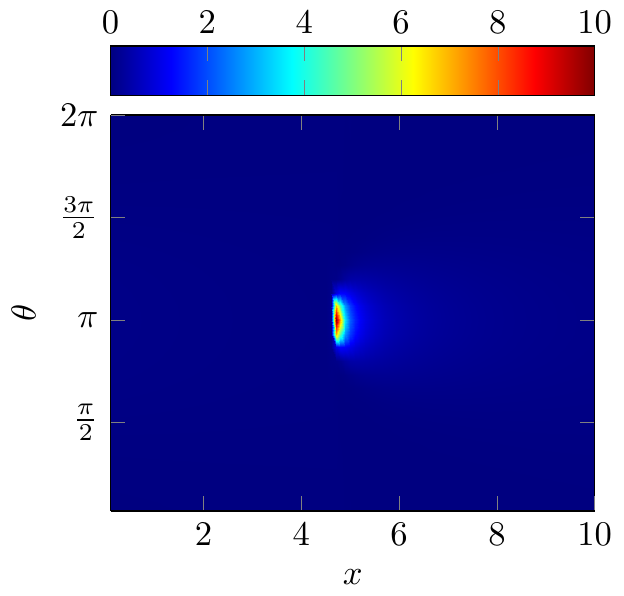}
		\subcaption{Pseudo-1D code, $t=1005.0$ (${\bar \rho}=0.0763$). Here the real value of $f$ is represented as a function of $x$ and $\theta$.}\label{fig:band-pseudo1D}
	\end{minipage}

	\begin{minipage}[t]{60mm}
		\centering
		\includegraphics{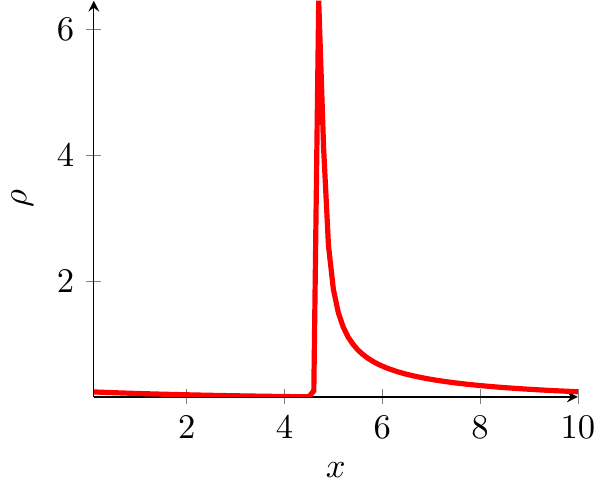}
		\subcaption{Pseudo-1D code, $t=1005.0$ (${\bar \rho}=0.0763$). $\rho$ is represented as a function of $x$.}\label{fig:band_pseudo1D_rho}
	\end{minipage}
	\caption{Different observations of bands. The surface plot corresponds to the observed density $\rho=\int_{\mathbb S^1}f(t, {\bf x}, \omega)\dd\omega$. The arrows on the top correspond to the local flux ${\bf j}({\bf x})$. Figures \ref{fig:band-fromrand-1000} and \ref{fig:band-fromrand-1227} were obtained starting from the same random initial condition. Figure \ref{fig:band-homog} was obtained starting from a homogeneous in $y$ initial condition. Figure \ref{fig:band-pseudo1D} was obtained by a one-dimensional version of the code.  In all cases, the same set of parameters was used: $\mu=1.0$, $\sigma=0.2$, $c=1.0$, $L=10.0$, $\Delta x=\Delta y=0.1$, $\Delta \theta = \frac{2\pi}{30}$.}\label{fig:bands}
\end{figure}

In the Vicsek model \eqref{eq:kinetic_Vicsek}, we did not observe the formation of any bands. Rather, the dynamics always converge to a robust global alignment dynamics, where the spatial distribution (first moment of $f$) converges to a constant. The typical long-time behavior is represented in Figure \ref{fig:Vicsek_longtime}. We postulate that the long-time behavior of this equation is just to converge to a uniform distribution of Von Mises equilibria to the homogeneous equation.

On the other hand, the DFL model \eqref{eq:kinetic_Amic} of interaction leads to the observation of bands. Typically, for a fixed set of parameters, we observed two different scenarios regarding the behavior of the local density $\rho$ and mean value ${\rho}$
\begin{equation}
	\label{eq:density_f_bar}
	\rho(t, {\bf x}) = \int_{\mathbb S^1}f(t, {\bf x}, \omega)\dd\omega\, ,\qquad {\bar \rho} =\frac{\int_{[0,L]^2\times \mathbb S^1} f({\bf x}, \omega)\dd{\bf x}\dd \omega}{2\pi L^2 } .
\end{equation}
For a fixed mean value ${\rho}$, when the strength of interaction $\mu$ is small compared to the diffusion parameter $\sigma$ (i.e. $\mu\gg\sigma$), we observe that the solution converges to a uniform steady state. However, when $\mu\ll\sigma$, we observe the formation of bands as shown in Figure \ref{fig:bands} (in which the x-axis has been reversed to provide better aesthetics). Thus, we retrieve an equivalent of the phase transition dynamics noticed by Frouvelle and Liu in \cite{frouvelle_dynamics_2012}. Those bands were first noticed starting from a random initial condition (Figure \ref{fig:band-fromrand-1000}). Even though they literally emerge from chaos, they appear to be only meta-stable, as their small inhomogeneity in the direction perpendicular to the propagation amplifies slowly by attracting the neighbor particles and finally lead to high and localized concentrations as shown in Figure \ref{fig:band-fromrand-1227}. At this point the computation is difficult to continue due to the extremely high computation times required by the CFL condition \eqref{eq:CFL-collision-tractable}. Starting from an initial condition which is homogeneous in one direction (e.g. in $y$), however,  the observed bands are very stable in time and can be kept alive for apparently an arbitrarily long time (the homogeneity being preserved by our scheme). Such a band is represented in Figure \ref{fig:band-homog}. The initial condition we used is the following:
\begin{equation*}
	f_0(x, y, \theta)={\bar \rho}\left(1+\frac{1}{10}\underset{k=1}{\overset{5}{\sum}}\cos(p_k\theta) + \cos\left(2p_k\pi \frac{x}{L} \right)\right).
\end{equation*}
Bands were also observed in a modified one-dimensional  model which we encoded to take advantage of the preservation of homogeneity in one direction. A resulting band is presented in Figure \ref{fig:band-pseudo1D} and \ref{fig:band_pseudo1D_rho}.

\begin{figure}
	\centering
	\includegraphics{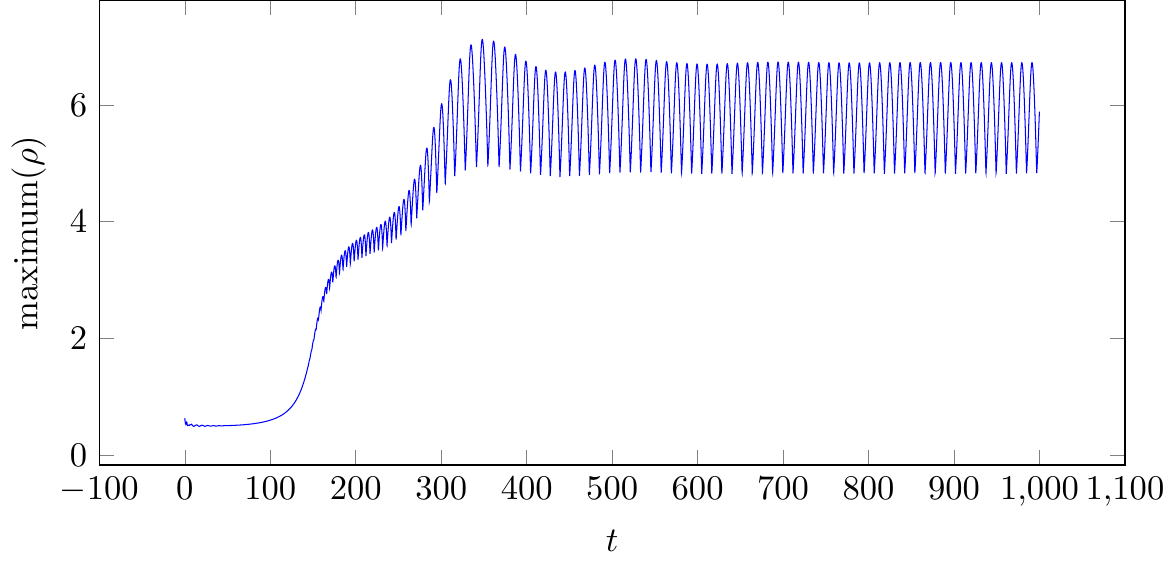}
	\caption{Maximal value of $\rho(t, {\bf x}) $ as a function of $t$.}\label{fig:max-band}
\end{figure}

Numerical evidences show that the bands cannot be understood as traveling wave solutions to the kinetic equation \eqref{eq:kinetic_Vicsek}, as one may believe at first sight. Indeed, there remains an inner motion inside the bands, that we can reveal by monitoring the maximal value of $\rho$ through time (see Figure \ref{fig:max-band}). This reveals an asymptotically periodic behavior that strongly resembles  the notion of pulsating fronts, which has been extensively studied in the context of reaction-diffusion equations \cite{xin_front_2000}. A deeper analytical understanding of this phenomenon is left for future work.

\begin{figure}
	\centering
	\begin{minipage}[t]{60mm}
		\centering
		\includegraphics{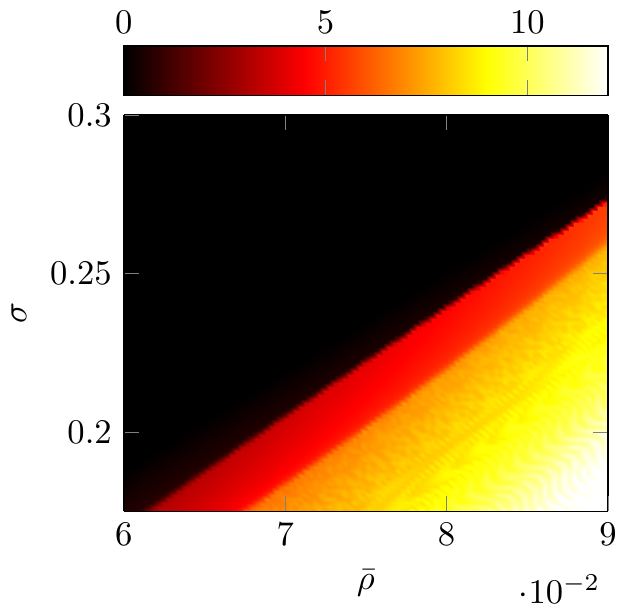}
		\subcaption{Entropy against the uniform distribution}\label{fig:band-entropy-uniform}
	\end{minipage}
	\begin{minipage}[t]{60mm}
		\centering
		\includegraphics{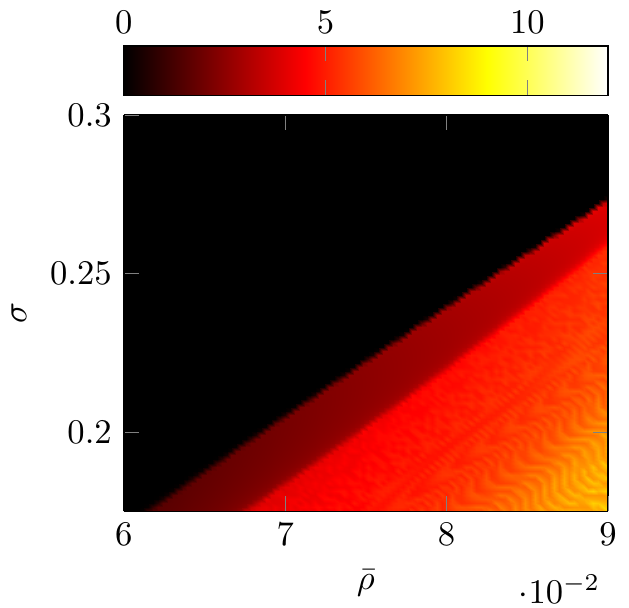}
		\subcaption{Entropy against the Von Mises distribution}\label{fig:band-entropy-vonmises}
	\end{minipage}

	\begin{minipage}[t]{60mm}
		\centering
		\includegraphics{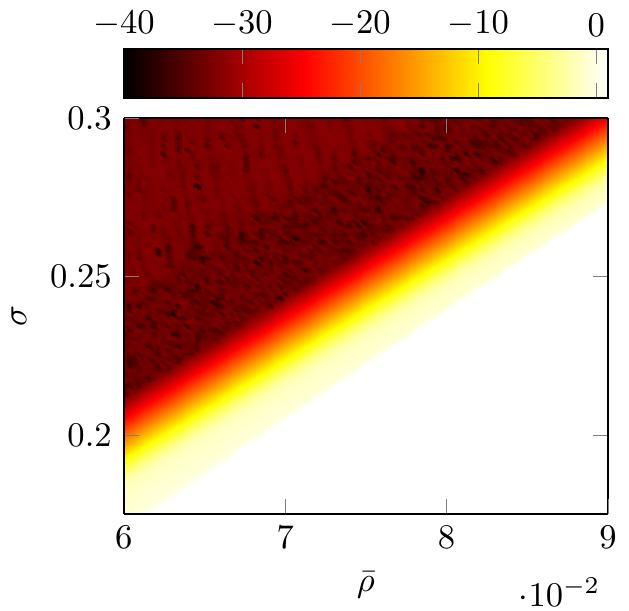}
		\subcaption{Log-entropy against the uniform distribution}\label{fig:band-entropy-log-uniform}
	\end{minipage}
	\begin{minipage}[t]{60mm}
		\centering
		\includegraphics{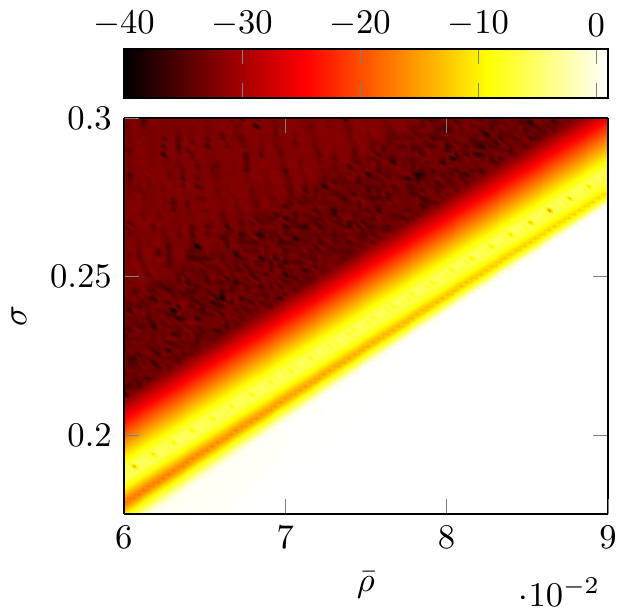}
		\subcaption{Log-entropy against the Von Mises distribution}\label{fig:band-entropy-log-vonmises}
	\end{minipage}

	\caption{Entropy as a function of ${\bar \rho}$ and $\sigma$.}\label{fig:band-entropy}
\end{figure}

Finally, to strengthen the link between the phase transition and the formation of bands, we show in Figure \ref{fig:band-entropy} two kinds of entropy computed for a range of values of the diffusion coefficient $d$ and the mean value of the initial condition ${\bar \rho}$. Figure \ref{fig:band-entropy-uniform} represents the entropy of $f$ computed against the uniform distribution of the same mass:
\begin{equation*}
	\mathcal E_u[f] = \int_0^L\int_0^L\int_0^{2\pi}f(t, {\bf x}, \theta)\log\left(\frac{f(t, {\bf x}, \theta)}{{\bar \rho}}\right)\dd\theta\dd{\bf x}.
\end{equation*}
Figure \ref{fig:band-entropy-vonmises} represents the generalized entropy of $f$ computed against the corresponding Von Mises distribution:
\begin{equation*}
	\mathcal E_{VM}[f] = \int_0^L\int_0^L\int_0^{2\pi}f(t, {\bf x}, \theta)\log\left(\frac{f(t, {\bf x}, \theta)}{M[{\bar \rho}](\theta)}\right)\dd\theta\dd{\bf x},
\end{equation*}
where $M[{\bar \rho}](\theta)=2\pi{\bar \rho} \frac{\exp\left(\frac{\mu\kappa}{d}\cos(\theta)\right)}{\int_0^{2\pi}\exp\left(\frac{\mu\kappa}{d}\cos(\theta)\right)\dd\theta}$ is the only candidate as stationary Von Mises distribution, $\kappa$ satisfying the compatibility condition
\begin{equation*}\label{eq:con-compat}
	2\pi{\bar \rho}\frac{\int_0^{2\pi}\cos\theta\exp\left(\frac{\mu\kappa}{d}\cos(\theta)\right)\dd\theta}{\int_0^{2\pi}\exp\left(\frac{\mu\kappa}{d}\cos(\theta)\right)\dd\theta}=\kappa.
\end{equation*}
Let us recall that the latter has only one solution $\kappa=0$ when $\sigma\geq \pi \mu {\bar \rho} $, and has exactly one positive solution when  $\sigma< \pi \mu {\bar \rho} $ \cite{frouvelle_dynamics_2012}.

The match between the two plots suggests that the latter stationary state candidate is never stable except when $\kappa=0$. Indeed for $\sigma\geq \pi \mu {\bar \rho} $, the uniform distribution is stable for the homogeneous problem and $\kappa=0$; this corresponds to the top-left part of Figure \ref{fig:band-entropy-uniform}, which suggests that this stability is transferred to the inhomogeneous problem \eqref{eq:kinetic_Amic}. For $\sigma< \pi \mu {\bar \rho} $, however, we have $\kappa>0$ and the stable state for the homogeneous problem is described by the corresponding Von Mises distribution $M[{\bar \rho}](\theta)$; Figure \ref{fig:band-entropy-vonmises} suggests that the inhomogeneous problem behaves otherwise, neither the uniform nor the homogeneous Von Mises distribution corresponding to the long-time behavior of the equation, except possibly in a very small area near $ \sigma\approx \pi \mu{\bar \rho}$ (which appears more clearly in the log-plots \ref{fig:band-entropy-log-uniform} and \ref{fig:band-entropy-log-vonmises}). Instead, the instability of both homogeneous stationary states could be at the origin of the formation of bands.

\section{Conclusion}

In this manuscript, we have introduced a numerical scheme to solve kinetic equations related to the Vicsek model. Despite the additional difficulty to treat non-conservative dynamics, the numerical scheme is able to preserve positivity and entropy which allow to study the long-time behavior of the dynamics. Of particular interest in the formation of bands that were first observed at the microscopic model (i.e. particle simulations). The scheme is able to capture such band formation at the kinetic level.

It remains many open questions related to the kinetic equation of the Vicsek model. First, we still do not know if asymptotically there is an analytical description of the band. Having an {\it Ansatz} will help to study the convergence of the dynamics toward a band formation. However, the lack of entropy in the non-homogeneous case makes this task complicated. Another open problem is to investigate how the Vicsek model in a certain regime (i.e. low density, large coefficients) is able to create bands. We do not observe such band formation for the original Vicsek model at the kinetic level, bands only form with the DFL dynamics. This might indicate that the propagation of chaos, which links particle and kinetic formulations, is not valid in this regime.

\bibliographystyle{plain}
\bibliography{Vicsek_DFL}

\end{document}